\theoremstyle{plain}
\newtheorem{theorem}{Theorem}[section]
\newtheorem{proposition}[theorem]{Proposition}
\newtheorem{corollary}[theorem]{Corollary}
\theoremstyle{definition}
\newtheorem{remark}{\textnormal{\textbf{Remark}}}
\newtheorem*{acknowledgement}{\textnormal{\textbf{Acknowledgement}}}
\theoremstyle{remark}
\numberwithin{equation}{section}
\begin{document}

\title[On monoids of injective partial
cofinite selfmaps]{On monoids of injective partial cofinite
selfmaps}

\author[Oleg~Gutik \and Du\v{s}an~Repov\v{s}]{Oleg~Gutik* \and Du\v{s}an~Repov\v{s}**}

\newcommand{\acr}{\newline\indent}

\address{\llap{*\,}Department of Mechanics and Mathematics\acr
                   Ivan Franko National University of Lviv\acr
                   Universytetska 1\acr
                   Lviv, 79000\acr UKRAINE}
\email{{o\_gutik}@franko.lviv.ua, {ovgutik}@yahoo.com}

\address{\llap{**\,}Faculty of Education, and \acr
                    Faculty of Mathematics and Physics\acr
                    University of Ljubljana\acr
                    Kardeljeva plo\v{s}\v{c}ad 16\acr
                    Ljubljana, 1000\acr
                    SLOVENIA}
\email{dusan.repovs@guest.arnes.si}

\thanks{This research was supported by the Slovenian Research Agency grants
P1-0292-0101,  J1-5435-0101 and J1-6721-0101.}

\keywords{Bicyclic semigroup, semigroup of
bijective partial transformations, congruence, symmetric group, group congruence, semidirect product.
 }
\subjclass[2010]{Primary 20M20, 20M18;
Secondary 20B30}

\begin{abstract}
We study the semigroup $\mathscr{I}^{\mathrm{cf}}_\lambda$ of
injective partial cofinite selfmaps of an infinite cardinal $\lambda$.
We show that $\mathscr{I}^{\mathrm{cf}}_\lambda$ is a bisimple
inverse semigroup and each chain of idempotents in $\mathscr{I}^{\mathrm{cf}}_\lambda$ is contained in a bicyclic subsemigroup of $\mathscr{I}^{\mathrm{cf}}_\lambda$,
we describe the Green relations on
$\mathscr{I}^{\mathrm{cf}}_\lambda$ and we prove that every
non-trivial congruence on $\mathscr{I}^{\mathrm{cf}}_\lambda$ is a
group congruence. Also, we describe the structure of the quotient semigroup $\mathscr{I}^{\mathrm{cf}}_\lambda/\sigma$, where $\sigma$ is the least group congruence on $\mathscr{I}^{\mathrm{cf}}_\lambda$.
\end{abstract}

\maketitle

\section{Introduction and preliminaries}

In this paper we shall denote the first infinite ordinal by $\omega$ and the cardinality of the set $A$ by $|A|$. We shall identify all cardinals with
their corresponding initial ordinals.  We shall denote the set of integers by $\mathbb{Z}$ and the additive group of integers by $\mathbb{Z}(+)$.

A semigroup $S$ is called {\it inverse} if for every element $x\in S$ there exists a unique $x^{-1}\in S$ such that $xx^{-1}x=x$ and $x^{-1}xx^{-1}=x^{-1}$. The element $x^{-1}$ is called the {\it inverse of} $x\in S$. If $S$ is an inverse semigroup, then the function $\operatorname{inv}\colon S\to S$ which assigns to every element $x$ of $S$ its inverse element $x^{-1}$ is called the {\it inversion}.

A congruence $\mathfrak{C}$ on a semigroup $S$ is called \emph{non-trivial} if $\mathfrak{C}$ is distinct from the universal and the identity congruences on $S$, and a \emph{group congruence} if the quotient semigroup $S/\mathfrak{C}$ is a group.

If $S$ is a semigroup, then we shall denote the subset of all idempotents in $S$ by $E(S)$. If $S$ is an inverse semigroup, then $E(S)$ is closed under multiplication and we shall refer to $E(S)$ as a \emph{band} (or the \emph{band of} $S$). Then the semigroup operation on $S$ determines the following partial order $\leqslant$
on $E(S)$: $e\leqslant f$ if and only if $ef=fe=e$. This order is called the {\em natural partial order} on $E(S)$. A \emph{semilattice} is a commutative semigroup of idempotents. A semilattice $E$ is called {\em linearly ordered} or a \emph{chain}
if its natural  order is a linear order. A \emph{maximal chain} of a semilattice $E$ is a chain which is not properly contained in any other chain of $E$.

The Axiom of Choice implies the existence of maximal chains in every
partially ordered set. According to
\cite[Definition~II.5.12]{Petrich1984}, a chain $L$ is called an
\emph{$\omega$-chain} if $L$ is isomorphic to $\{0,-1,-2,-3,\ldots\}$ with
the usual order $\leqslant$ or equivalently, if $L$ is isomorphic to
$(\omega,\max)$. Let $E$ be a semilattice and $e\in E$. We put
${\downarrow} e=\{ f\in E\mid f\leqslant e\}$ and ${\uparrow} e=\{
f\in E\mid e\leqslant f\}$.  By
$(\mathscr{P}_{<\omega}(\lambda),\cup)$ we shall denote the
\emph{free semilattice with identity} over a set of cardinality
$\lambda\geqslant\omega$, i.e.,
$(\mathscr{P}_{<\omega}(\lambda),\cup)$ is the set of all finite
subsets (with the empty set) of $\lambda$ with the semilattice
operation ``union''.

If $S$ is a semigroup, then we shall denote the Green relations on $S$ by $\mathscr{R}$, $\mathscr{L}$, $\mathscr{J}$, $\mathscr{D}$ and $\mathscr{H}$ (see \cite{CP}). A semigroup $S$ is called \emph{simple} if $S$ does not contain proper two-sided ideals and \emph{bisimple} if $S$ has only one $\mathscr{D}$-class.

The bicyclic semigroup ${\mathscr{C}}(p,q)$ is the semigroup with the identity $1$ generated by elements $p$ and $q$ subject only to the condition $pq=1$. The bicyclic semigroup is bisimple and every one of its congruences is either trivial or a group congruence. Moreover, every homomorphism $h$ of the bicyclic semigroup is either an isomorphism or the image of ${\mathscr{C}}(p,q)$ under $h$ is a cyclic group~(see \cite[Corollary~1.32]{CP}). The bicyclic semigroup plays an important role in algebraic theory of semigroups and in the theory of topological semigroups. For example a well-known Andersen's result~\cite{Andersen} states that a ($0$--)simple semigroup is completely ($0$--)simple if and only if it does not contain the bicyclic semigroup. The bicyclic semigroup admits only the discrete topology~\cite{EberhartSelden1969}. The problem of embeddability of the bicycle semigroup into compact-like semigroups was studied in \cite{AHK, BanakhDimitrovaGutik2009, BanakhDimitrovaGutik2010, GutikRepovs2007,
HildebrantKoch1988}.

\begin{remark}\label{remark-1.1}
We observe that the bicyclic semigroup is isomorphic to the
semigroup $\mathscr{C}_{\mathbb{N}}(\alpha,\beta)$ which is
generated by partial transformations $\alpha$ and $\beta$ of the set
of positive integers $\mathbb{N}$, defined as follows:
$(n)\alpha=n+1$ if $n\geqslant 1$ and $(n)\beta=n-1$ if $n> 1$ (see Exercise~IV.1.11$(ii)$ in \cite{Petrich1984}).
\end{remark}

If $T$ is a semigroup, then we say that a subsemigroup $S$ of $T$ is a \emph{bicyclic subsemigroup of} $T$ if $S$ is isomorphic to the bicyclic semigroup ${\mathscr{C}}(p,q)$.

Hereafter we shall assume that $\lambda$ is an infinite cardinal.
If $\alpha\colon X\rightharpoonup Y$ is a partial map, then we shall denote
the domain and the range of $\alpha$ by $\operatorname{dom}\alpha$ and $\operatorname{ran}\alpha$, respectively.

Let $\mathscr{I}_\lambda$ denote the set of all partial one-to-one
transformations of an infinite set $X$ of cardinality $\lambda$
together with the following semigroup operation:
\begin{equation*}
x(\alpha\beta)=(x\alpha)\beta \quad \hbox{if} \quad
x\in\operatorname{dom}(\alpha\beta)=\left\{
y\in\operatorname{dom}\alpha\mid
y\alpha\in\operatorname{dom}\beta\right\},\quad \hbox{for }
\alpha,\beta\in\mathscr{I}_\lambda.
\end{equation*}
The semigroup $\mathscr{I}_\lambda$ is called the \emph{symmetric
inverse semigroup} over the set $X$~(see \cite[Section~1.9]{CP}).
The symmetric inverse semigroup was introduced by
Vagner~\cite{Wagner1952} and it plays a major role in the theory of
semigroups.

Furthermore, we shall identify the cardinal $\lambda=|X|$ with the
set $X$. By $\mathscr{I}^{\mathrm{cf}}_\lambda$ we shall denote a
subsemigroup of injective partial selfmaps of $\lambda$ with
cofinite domains and ranges in $\mathscr{I}_\lambda$, i.e.,
\begin{equation*}
 \mathscr{I}^{\mathrm{cf}}_\lambda=\left\{\alpha\in\mathscr{I}_\lambda\mid
 |\lambda\setminus\operatorname{dom}\alpha|<\infty \quad \mbox{and} \quad
 |\lambda\setminus\operatorname{ran}\alpha|<\infty\right\}.
\end{equation*}
Obviously, $\mathscr{I}^{\mathrm{cf}}_\lambda$ is an inverse
submonoid of the semigroup $\mathscr{I}_\lambda$. We shall call the
semigroup $\mathscr{I}^{\mathrm{cf}}_\lambda$  the \emph{monoid of
injective partial cofinite selfmaps} of $\lambda$.

Next, by $\mathbb{I}$ we shall denote the identity and by
$H(\mathbb{I})$ the group of units of the semigroup
$\mathscr{I}^{\mathrm{cf}}_\lambda$.

It well known that each partial injective cofinite selfmap $f$ of $\lambda$ induces a homeomorphism $f^*\colon\lambda^*\rightarrow\lambda^*$ of the remainder $\lambda^*=\beta\lambda\setminus\lambda$ of the Stone-\v{C}ech compactification of the discrete space $\lambda$. Moreover, under some set theoretic axioms (like \textbf{PFA} or \textbf{OCA}), each homeomorphism of $\omega^*$ is induced by some partial injective cofinite selfmap of $\omega$ (see  \cite{ShelahSteprans1989}--\cite{Velickovic1993}). So the inverse semigroup  $\mathscr{I}^{\mathrm{cf}}_\lambda$  admits a natural homomorphism  $\mathfrak{h}\colon \mathscr{I}^{\mathrm{cf}}_\lambda\rightarrow \mathscr{H}(\lambda^*)$ to the homeomorphism group $\mathscr{H}(\lambda^*)$ of $\lambda^*$ and this homomorphism is surjective under certain set theoretic assumptions.

The semigroups $\mathscr{I}_{\infty}^{\!\nearrow}(\mathbb{N})$ and $\mathscr{I}_{\infty}^{\!\nearrow}(\mathbb{Z})$ of injective isotone partial selfmaps with cofinite domains and images of positive integers and integers, respectively, were studied in \cite{GutikRepovs2011} and \cite{GutikRepovs2012x}. There it was proved that the semigroups $\mathscr{I}_{\infty}^{\!\nearrow}(\mathbb{N})$ and $\mathscr{I}_{\infty}^{\!\nearrow}(\mathbb{Z})$ have properties similar to the bicyclic semigroup: they are bisimple and  every non-trivial homomorphic image of $\mathscr{I}_{\infty}^{\!\nearrow}(\mathbb{N})$ and $\mathscr{I}_{\infty}^{\!\nearrow}(\mathbb{Z})$ is a group, and moreover, the semigroup $\mathscr{I}_{\infty}^{\!\nearrow}(\mathbb{N})$ has $\mathbb{Z}(+)$ as a maximal group image and $\mathscr{I}_{\infty}^{\!\nearrow}(\mathbb{Z})$ has $\mathbb{Z}(+)\times\mathbb{Z}(+)$, respectively.

In this paper we shall study algebraic properties of the semigroup
$\mathscr{I}^{\mathrm{cf}}_\lambda$. We shall show that
$\mathscr{I}^{\mathrm{cf}}_\lambda$ is a bisimple inverse semigroup
and every chain of idempotents in $\mathscr{I}^{\mathrm{cf}}_\lambda$ is contained in a bicyclic subsemigroup of $\mathscr{I}^{\mathrm{cf}}_\lambda$, we shall describe the Green relations on $\mathscr{I}^{\mathrm{cf}}_\lambda$ and we shall prove that every non-trivial congruence on $\mathscr{I}^{\mathrm{cf}}_\lambda$ is a group congruence. Also, we shall describe the structure of the quotient semigroup $\mathscr{I}^{\mathrm{cf}}_\lambda/\sigma$, where $\sigma$ is the least group congruence on $\mathscr{I}^{\mathrm{cf}}_\lambda$.


\section{Algebraic properties of the semigroup
$\mathscr{I}^{\mathrm{cf}}_\lambda$}

\begin{proposition}\label{proposition-2.1} { }
\begin{itemize}
    \item[$(i)$] $\mathscr{I}^{\mathrm{cf}}_\lambda$
         is a simple semigroup.

    \item[$(ii)$] An element $\alpha$ of the semigroup
         $\mathscr{I}^{\mathrm{cf}}_\lambda$
         is an idempotent if and only if $(x)\alpha=x$ for every
         $x\in\operatorname{dom}\alpha$.

    \item[$(iii)$] If $\varepsilon,\iota\in
          E(\mathscr{I}^{\mathrm{cf}}_\lambda)$,
          then $\varepsilon\leqslant\iota$ if and only if
          $\operatorname{dom}\varepsilon\subseteq
          \operatorname{dom}\iota$.

    \item[$(iv)$] The semilattice
          $E(\mathscr{I}^{\mathrm{cf}}_\lambda)$ is isomorphic to
          $(\mathscr{P}_{<\omega}(\lambda),\cup)$ under
          the mapping $(\varepsilon)h=\lambda\setminus
          \operatorname{dom}\varepsilon$.

    \item[$(v)$] Every maximal chain in
          $E(\mathscr{I}^{\mathrm{cf}}_\lambda)$ is an $\omega$-chain.

    \item[$(vi)$] $\alpha\mathscr{R}\beta$ in
         $\mathscr{I}^{\mathrm{cf}}_\lambda$ if and only if
         $\operatorname{dom}\alpha=\operatorname{dom}\beta$.

    \item[$(vii)$] $\alpha\mathscr{L}\beta$ in
         $\mathscr{I}^{\mathrm{cf}}_\lambda$ if and only if
         $\operatorname{ran}\alpha=\operatorname{ran}\beta$.

    \item[$(viii)$] $\alpha\mathscr{H}\beta$ in
         $\mathscr{I}^{\mathrm{cf}}_\lambda$ if and only if
         $\operatorname{dom}\alpha=\operatorname{dom}\beta$ and
         $\operatorname{ran}\alpha=\operatorname{ran}\beta$.

    \item[$(ix)$] $\alpha\mathscr{D}\beta$ for all
         $\alpha,\beta\in\mathscr{I}^{\mathrm{cf}}_\lambda$ and
         hence the semigroup $\mathscr{I}^{\mathrm{cf}}_\lambda$
         is bisimple.
\end{itemize}
\end{proposition}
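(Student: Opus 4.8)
The plan is to handle $(ii)$--$(iv)$ by elementary computation inside $\mathscr{I}_\lambda$, to deduce the combinatorial statement $(v)$ from $(iii)$--$(iv)$, to read off the Green relations $(vi)$--$(ix)$ from the standard formulas valid in any inverse semigroup, and finally to obtain $(i)$. The computational backbone is that for $\alpha\in\mathscr{I}^{\mathrm{cf}}_\lambda$ one has $\alpha\alpha^{-1}=\mathbb{I}_{\operatorname{dom}\alpha}$ and $\alpha^{-1}\alpha=\mathbb{I}_{\operatorname{ran}\alpha}$, where $\mathbb{I}_{A}$ denotes the identity map of a subset $A\subseteq\lambda$, together with $\mathbb{I}_{A}\mathbb{I}_{B}=\mathbb{I}_{A\cap B}$; all three are immediate from the definition of the semigroup operation.

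For $(ii)$, if $(x)\alpha=x$ on $\operatorname{dom}\alpha$ then $\alpha\alpha=\alpha$ trivially; conversely an idempotent $\alpha$ is its own inverse (by uniqueness of the inverse in an inverse semigroup), so $\alpha=\alpha\alpha=\alpha\alpha^{-1}=\mathbb{I}_{\operatorname{dom}\alpha}$. For $(iii)$, $\varepsilon\leqslant\iota$ means $\varepsilon\iota=\varepsilon$, i.e. $\mathbb{I}_{\operatorname{dom}\varepsilon\cap\operatorname{dom}\iota}=\mathbb{I}_{\operatorname{dom}\varepsilon}$, which is equivalent to $\operatorname{dom}\varepsilon\subseteq\operatorname{dom}\iota$. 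For $(iv)$ one checks that $h\colon\varepsilon\mapsto\lambda\setminus\operatorname{dom}\varepsilon$ is well defined (cofiniteness of $\operatorname{dom}\varepsilon$), injective (an idempotent is determined by its domain, by $(ii)$), surjective (a finite set $F$ is the $h$-image of the idempotent with domain $\lambda\setminus F$), and carries the semilattice operation $\cdot$ to $\cup$ by De Morgan's law $\lambda\setminus(\operatorname{dom}\varepsilon\cap\operatorname{dom}\iota)=(\lambda\setminus\operatorname{dom}\varepsilon)\cup(\lambda\setminus\operatorname{dom}\iota)$.

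Statement $(v)$ is the step that requires a genuine, if short, argument. By $(iv)$ a maximal chain in $E(\mathscr{I}^{\mathrm{cf}}_\lambda)$ is carried by $h$ onto a maximal chain $\mathcal{F}$ of $(\mathscr{P}_{<\omega}(\lambda),\cup)$, and $h$ preserves the natural partial order. Such an $\mathcal{F}$ has a greatest element $\varnothing$ (otherwise $\varnothing$ can be adjoined); since each finite subset of $\lambda$ has only finitely many subsets but, $\lambda$ being infinite, always admits an enlargement by one point, $\mathcal{F}$ has only finitely many members above but infinitely many below each of its members, and no ``gaps'' (between $F\subsetneq G$ with $|G\setminus F|\geqslant 2$ one can insert $F\cup\{x\}$). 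Hence $\mathcal{F}$, and therefore the original chain, is order isomorphic to $\{0,-1,-2,\dots\}$, i.e. an $\omega$-chain.

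Finally, since $\mathscr{I}^{\mathrm{cf}}_\lambda$ is inverse, $\alpha\mathscr{R}\beta$ iff $\alpha\alpha^{-1}=\beta\beta^{-1}$ and $\alpha\mathscr{L}\beta$ iff $\alpha^{-1}\alpha=\beta^{-1}\beta$; by the backbone identities these become $\operatorname{dom}\alpha=\operatorname{dom}\beta$ and $\operatorname{ran}\alpha=\operatorname{ran}\beta$, which gives $(vi)$ and $(vii)$, and $(viii)$ follows from $\mathscr{H}=\mathscr{R}\cap\mathscr{L}$. (That these relations, computed relative to the submonoid, coincide with the stated conditions is seen by noting that $\operatorname{dom}\alpha=\operatorname{dom}\beta$ forces $\gamma:=\alpha^{-1}\beta\in\mathscr{I}^{\mathrm{cf}}_\lambda$ with $\alpha\gamma=\beta$, and dually.) For $(ix)$, given $\alpha,\beta$ the sets $\operatorname{dom}\alpha$ and $\operatorname{ran}\beta$ are cofinite in $\lambda$, hence both of cardinality $\lambda$, so some bijection $\gamma\colon\operatorname{dom}\alpha\to\operatorname{ran}\beta$ lies in $\mathscr{I}^{\mathrm{cf}}_\lambda$; then $\alpha\,\mathscr{R}\,\gamma$ and $\gamma\,\mathscr{L}\,\beta$, so $\alpha\mathscr{D}\beta$ and $\mathscr{I}^{\mathrm{cf}}_\lambda$ is bisimple. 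Item $(i)$ then follows since $\mathscr{D}\subseteq\mathscr{J}$ makes $\mathscr{J}$ universal, so $\mathscr{I}^{\mathrm{cf}}_\lambda$ has no proper two-sided ideal; alternatively, one checks directly that for a bijection $\gamma\colon\operatorname{dom}\beta\to\operatorname{dom}\alpha$ in $\mathscr{I}^{\mathrm{cf}}_\lambda$ and $\delta:=\alpha^{-1}\gamma^{-1}\beta$ one has $\gamma\alpha\delta=\beta$. The only points needing real care are $(v)$ and the verification that the relative and absolute Green relations agree; everything else is routine.
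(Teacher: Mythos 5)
Your proposal is correct and follows essentially the same route as the paper: the paper likewise dismisses $(ii)$--$(v)$ as immediate from the definitions, obtains $(vi)$--$(viii)$ from the standard description of Green's relations in a regular (inverse) semigroup, and proves $(ix)$ by exactly your bijection $\gamma\colon\operatorname{dom}\alpha\to\operatorname{ran}\beta$; its proof of $(i)$ is the direct construction $\gamma\alpha\delta=\beta$ that you give as your alternative. You merely fill in more of the routine details (notably the $\omega$-chain argument for $(v)$), which is fine.
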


\begin{proof}
$(i)$ We shall show that
$\mathscr{I}^{\mathrm{cf}}_\lambda\cdot\alpha
\cdot\mathscr{I}^{\mathrm{cf}}_\lambda=
\mathscr{I}^{\mathrm{cf}}_\lambda$ for every element $\alpha\in\mathscr{I}^{\mathrm{cf}}_\lambda$. Let $\alpha$ and $\beta$ are
arbitrary elements of the semigroup
$\mathscr{I}^{\mathrm{cf}}_\lambda$. We shall choose
elements
$\gamma,\delta\in\mathscr{I}^{\mathrm{cf}}_\lambda$ such that
$\gamma\cdot\alpha\cdot\delta=\beta$. We put
$\operatorname{dom}\gamma=\operatorname{dom}\beta$,
$\operatorname{ran}\gamma=\operatorname{dom}\alpha$,
$\operatorname{dom}\delta=\operatorname{ran}\alpha$ and
$\operatorname{ran}\delta=\operatorname{ran}\beta$. Since the sets
$\lambda\setminus\operatorname{dom}\alpha$ and
$\lambda\setminus\operatorname{dom}\beta$ are finite we conclude
that there exists a bijective map
$f\colon\operatorname{dom}\alpha\rightarrow\operatorname{dom}\beta$.
We put $\gamma=f$ and
$\left(\left((x)\gamma\right)\alpha\right)\delta=(x)\beta$ for all
$x\in\operatorname{dom}\beta$. Then we have that
$\gamma\cdot\alpha\cdot\delta=\beta$.

Statements $(ii)-(v)$ are trivial and they follow from the
definition of the semigroup $\mathscr{I}^{\mathrm{cf}}_\lambda$.
The proofs of $(vi)-(viii)$ follow trivially from the fact that
$\mathscr{I}^{\mathrm{cf}}_\lambda$ is a regular semigroup, and
Proposition 2.4.2 and Exercise 5.11.2 in \cite{Howie1995}.

$(ix)$ Let $\alpha$ and $\beta$ be arbitrary elements of the
semigroup $\mathscr{I}^{\mathrm{cf}}_\lambda$. Since the sets
$\lambda\setminus\operatorname{dom}\alpha$ and
$\lambda\setminus\operatorname{ran}\beta$ are finite we conclude
that there exists a bijective map
$\gamma\colon\operatorname{dom}\alpha\rightarrow
\operatorname{ran}\beta$. Then
$\gamma\in\mathscr{I}^{\mathrm{cf}}_\lambda$ and by statements
$(vi)$ and $(vii)$ we have that $\alpha\mathscr{R}\gamma$ and
$\beta\mathscr{L}\gamma$ in $\mathscr{I}^{\mathrm{cf}}_\lambda$ and
hence $\alpha\mathscr{D}\beta$ in
$\mathscr{I}^{\mathrm{cf}}_\lambda$.
\end{proof}

We denote the group of all bijective transformations of a set of cardinality $\lambda$ by $\mathscr{S}_\lambda$. Then we get the following:

\begin{corollary}\label{corollary-2.1a}
The group of units $H(\mathbb{I})$ of the semigroup
$\mathscr{I}^{\mathrm{cf}}_\lambda$ is isomorphic to
$\mathscr{S}_\lambda$.
\end{corollary}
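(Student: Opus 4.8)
The plan is to identify the group of units $H(\mathbb{I})$ directly as a concrete set of transformations and then recognise it as $\mathscr{S}_\lambda$. Recall that in any inverse monoid the group of units coincides with the $\mathscr{H}$-class of the identity element. Hence, by Proposition~\ref{proposition-2.1}$(viii)$, an element $\alpha\in\mathscr{I}^{\mathrm{cf}}_\lambda$ belongs to $H(\mathbb{I})$ if and only if $\operatorname{dom}\alpha=\operatorname{dom}\mathbb{I}=\lambda$ and $\operatorname{ran}\alpha=\operatorname{ran}\mathbb{I}=\lambda$; that is, if and only if $\alpha$ is a bijective selfmap of $\lambda$.

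Conversely, I would check that every bijective transformation $\alpha$ of $\lambda$ lies in $\mathscr{I}^{\mathrm{cf}}_\lambda$ and is a unit: indeed $\alpha$ is injective with $\operatorname{dom}\alpha=\operatorname{ran}\alpha=\lambda$, so $\lambda\setminus\operatorname{dom}\alpha=\lambda\setminus\operatorname{ran}\alpha=\varnothing$ is finite, whence $\alpha,\alpha^{-1}\in\mathscr{I}^{\mathrm{cf}}_\lambda$ and $\alpha\alpha^{-1}=\alpha^{-1}\alpha=\mathbb{I}$. Thus, after the identification of the cardinal $\lambda$ with a fixed set of cardinality $\lambda$, the underlying set of $H(\mathbb{I})$ is exactly the set of all bijections of that set.

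Finally, the binary operation on $H(\mathbb{I})$ inherited from $\mathscr{I}^{\mathrm{cf}}_\lambda$ is precisely composition of maps, which is the group operation of $\mathscr{S}_\lambda$; hence the identity map on this common underlying set is the required isomorphism $H(\mathbb{I})\to\mathscr{S}_\lambda$. The only point requiring a little care is the first step --- that $H(\mathbb{I})$ is the $\mathscr{H}$-class of $\mathbb{I}$, equivalently that the units of $\mathscr{I}^{\mathrm{cf}}_\lambda$ are exactly the maps with full domain and range --- but this is routine: if $\alpha\beta=\beta\alpha=\mathbb{I}$ in $\mathscr{I}_\lambda$, then comparing domains forces $\operatorname{dom}\alpha=\operatorname{dom}\beta=\lambda$, and then the identities $(x)\alpha\beta=x$ and $(y)\beta\alpha=y$ for all $x,y\in\lambda$ force $\alpha$ and $\beta$ to be mutually inverse bijections of $\lambda$, in particular $\operatorname{ran}\alpha=\lambda$. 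So no genuine obstacle is expected here.
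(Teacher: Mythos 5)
Your proof is correct and follows the same route the paper implicitly intends: the corollary is stated without proof because $H(\mathbb{I})$ is the $\mathscr{H}$-class of $\mathbb{I}$, which by Proposition~\ref{proposition-2.1}$(viii)$ consists exactly of the bijections of $\lambda$ under composition. Your additional direct verification that units must have full domain and range is a reasonable way of making the "routine" step explicit, but it adds nothing beyond what the paper takes for granted.
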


For any idempotents $\varepsilon$ and $\iota$ of the semigroup
$\mathscr{I}^{\mathrm{cf}}_\lambda$ we denote:
\begin{equation*}
    H(\varepsilon,\iota)=\left\{\chi\in\mathscr{I}^{\mathrm{cf}}_\lambda
    \mid
    \chi\cdot\chi^{-1}=\varepsilon \hbox{~and~}
    \chi^{-1}\cdot\chi=\iota\right\} \qquad \hbox{~and~} \qquad
    H(\varepsilon)=H(\varepsilon,\varepsilon).
\end{equation*}
Proposition~\ref{proposition-2.1}$(viii)$ implies that the set
$H(\varepsilon,\iota)$ is a $\mathscr{H}$-class and the set
$H(\varepsilon)$ is a a maximal subgroup in
$\mathscr{I}^{\mathrm{cf}}_\lambda$ for all idempotents
$\varepsilon,\iota\in\mathscr{I}^{\mathrm{cf}}_\lambda$.

Corollary~\ref{corollary-2.1a} and Proposition~2.20 of \cite{CP}
imply the following:

\begin{corollary}\label{corollary-2.2}
Every maximal subgroup of the semigroup $\mathscr{I}^{\mathrm{cf}}_\lambda$ is isomorphic to~$\mathscr{S}_\lambda$.
\end{corollary}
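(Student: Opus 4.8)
The plan is to combine the description of the $\mathscr{H}$-classes and maximal subgroups of $\mathscr{I}^{\mathrm{cf}}_\lambda$ with the classical fact, recorded as Proposition~2.20 of \cite{CP}, that in any semigroup the maximal subgroups lying in a single $\mathscr{D}$-class are mutually isomorphic. First I would invoke Proposition~\ref{proposition-2.1}$(ix)$: the semigroup $\mathscr{I}^{\mathrm{cf}}_\lambda$ is bisimple, so it has exactly one $\mathscr{D}$-class, namely the whole semigroup. Consequently every idempotent $\varepsilon\in E(\mathscr{I}^{\mathrm{cf}}_\lambda)$ lies in that single $\mathscr{D}$-class, and the maximal subgroup $H(\varepsilon)=H(\varepsilon,\varepsilon)$ containing $\varepsilon$ is one of the group $\mathscr{H}$-classes of the $\mathscr{D}$-class.

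Next I would recall that, by Corollary~\ref{corollary-2.1a}, the particular maximal subgroup sitting at the identity idempotent $\mathbb{I}$ — that is, the group of units $H(\mathbb{I})=H(\mathbb{I},\mathbb{I})$ — is isomorphic to $\mathscr{S}_\lambda$. Now Proposition~2.20 of \cite{CP} tells us that any two maximal subgroups contained in the same $\mathscr{D}$-class are isomorphic; since $\mathscr{I}^{\mathrm{cf}}_\lambda$ is bisimple, all of its maximal subgroups are contained in the unique $\mathscr{D}$-class, hence all are isomorphic to $H(\mathbb{I})\cong\mathscr{S}_\lambda$. (Concretely, the isomorphism $H(\mathbb{I})\to H(\varepsilon)$ is realized by conjugation $\chi\mapsto\xi^{-1}\chi\xi$ by any element $\xi$ of the $\mathscr{H}$-class $H(\varepsilon,\mathbb{I})$, which is nonempty precisely because $\varepsilon\,\mathscr{D}\,\mathbb{I}$; but one may simply quote \cite{CP}.)

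There is essentially no obstacle here: the statement is an immediate consequence of bisimplicity together with the identification of one maximal subgroup already carried out in Corollary~\ref{corollary-2.1a}. The only point requiring a word of care is making explicit that ``maximal subgroup'' means a subgroup of the form $H(\varepsilon)$ for some idempotent $\varepsilon$ — every subgroup of a semigroup is contained in such an $H(\varepsilon)$ — so that Proposition~2.20 of \cite{CP} applies to all of them. Once that is noted, the proof is a one-line citation chain, which is why the excerpt presents it as a corollary with the proof suppressed.
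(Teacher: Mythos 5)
Your proposal is correct and follows exactly the route the paper intends: the paper derives the corollary from Corollary~\ref{corollary-2.1a} (the group of units is isomorphic to $\mathscr{S}_\lambda$) together with Proposition~2.20 of \cite{CP}, using the bisimplicity established in Proposition~\ref{proposition-2.1}$(ix)$ so that all maximal subgroups lie in the single $\mathscr{D}$-class. Your added remark about conjugation by an element of $H(\varepsilon,\mathbb{I})$ is a correct elaboration of what that citation provides, but the argument is the same.
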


\begin{proposition}\label{proposition-2.3}
$\left|\mathscr{I}^{\mathrm{cf}}_\lambda\right|=2^{|\lambda|}$.
\end{proposition}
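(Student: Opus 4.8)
The plan is to establish the equality by proving the two inequalities $\left|\mathscr{I}^{\mathrm{cf}}_\lambda\right|\leqslant 2^{|\lambda|}$ and $\left|\mathscr{I}^{\mathrm{cf}}_\lambda\right|\geqslant 2^{|\lambda|}$ and then invoking the Cantor--Schr\"oder--Bernstein theorem.

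For the upper bound I would simply note that every element of $\mathscr{I}^{\mathrm{cf}}_\lambda$ is a partial transformation of $\lambda$, hence a subset of $\lambda\times\lambda$, so that
\[
\left|\mathscr{I}^{\mathrm{cf}}_\lambda\right|\leqslant\left|\mathscr{I}_\lambda\right|\leqslant\left|\mathscr{P}(\lambda\times\lambda)\right|=2^{|\lambda\times\lambda|}=2^{|\lambda|},
\]
where the last equality uses $|\lambda\times\lambda|=|\lambda|$ for the infinite cardinal $\lambda$.

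For the lower bound I would exhibit $2^{|\lambda|}$ pairwise distinct units of $\mathscr{I}^{\mathrm{cf}}_\lambda$. Since $\lambda$ is infinite we have $|\lambda|=2\cdot|\lambda|$, so we may fix a partition $\lambda=\bigcup_{i\in\lambda}\{a_i,b_i\}$ into two-element blocks indexed by $\lambda$. For each $A\subseteq\lambda$ define the permutation $\pi_A$ of $\lambda$ that interchanges $a_i$ with $b_i$ for every $i\in A$ and fixes all remaining points. Each $\pi_A$ is a bijection of $\lambda$, hence $\pi_A\in H(\mathbb{I})\subseteq\mathscr{I}^{\mathrm{cf}}_\lambda$ by Corollary~\ref{corollary-2.1a}, and the map $A\mapsto\pi_A$ is injective because $A=\{\,i\in\lambda\mid (a_i)\pi_A\neq a_i\,\}$ can be recovered from $\pi_A$. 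Therefore $\left|\mathscr{I}^{\mathrm{cf}}_\lambda\right|\geqslant\left|\mathscr{P}(\lambda)\right|=2^{|\lambda|}$, and the two bounds together give the assertion.

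I do not anticipate a genuine obstacle here; the only point needing a moment of thought is the lower bound, since the idempotents alone form a set of cardinality only $|\lambda|$ (they correspond to finite subsets of $\lambda$ by Proposition~\ref{proposition-2.1}$(iv)$) and so cannot witness $2^{|\lambda|}$. The block-transposition family $\{\pi_A\}_{A\subseteq\lambda}$ inside the group of units supplies the missing cardinality cleanly.
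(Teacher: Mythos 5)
Your proof is correct, and its essential ingredient -- the family of block transpositions $\pi_A$ indexed by $A\subseteq\lambda$ -- is exactly the device the paper uses for its lower bound (the paper phrases it as an injection $f\colon\mathscr{P}(\lambda)\to\mathscr{S}_{\lambda\sqcup\lambda}$ sending $A$ to a bijection with support $A\sqcup A$). Where you differ is in the overall architecture. The paper first establishes $|\mathscr{S}_\lambda|=2^{|\lambda|}$ as a standalone fact and then assembles $\left|\mathscr{I}^{\mathrm{cf}}_\lambda\right|$ from the Green-relations structure: by Proposition~\ref{proposition-2.1}$(viii)$ and Theorem~2.20 of \cite{CP} the semigroup decomposes into $\mathscr{H}$-classes indexed by $\mathscr{P}_{<\omega}(\lambda)\times\mathscr{P}_{<\omega}(\lambda)$, each of cardinality $|\mathscr{S}_\lambda|$, giving $|\lambda|\cdot 2^{|\lambda|}=2^{|\lambda|}$. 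You instead sandwich the whole semigroup directly: the crude upper bound $\left|\mathscr{I}^{\mathrm{cf}}_\lambda\right|\leqslant|\mathscr{P}(\lambda\times\lambda)|=2^{|\lambda|}$ by viewing partial maps as graphs, and the lower bound by locating your $2^{|\lambda|}$ permutations inside the group of units. Your route is more elementary, since it bypasses the $\mathscr{D}$-class machinery entirely; the paper's route has the side benefit of recording the exact cardinality of every maximal subgroup (which feeds Corollaries~\ref{corollary-2.1a} and~\ref{corollary-2.2}) and of exhibiting the count as a product over $\mathscr{H}$-classes. Your observation that the idempotents alone cannot witness the cardinality is a nice sanity check consistent with Proposition~\ref{proposition-2.1}$(iv)$.
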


\begin{proof}
Since $|\lambda\times\lambda|=|\lambda|$ we have that
$|\mathscr{S}_\lambda|\leqslant 2^{|\lambda\times\lambda|}=
2^{|\lambda|}$. Since $|\lambda\sqcup\lambda|=|\lambda|$ there
exists an injective map $f\colon\mathscr{P}(\lambda)\rightarrow
\mathscr{S}_{\lambda\sqcup\lambda}$ from the set
$\mathscr{P}(\lambda)$ of all subset of the cardinal $\lambda$ into
the group $\mathscr{S}_{\lambda\sqcup\lambda}$ defined in the following
way: $f(A)$ is a bijection on $\lambda\sqcup\lambda$ with support
$A\sqcup A$. Then we have that $|\mathscr{S}_\lambda|\geqslant
2^{|\lambda\sqcup\lambda|}=2^{|\lambda|}$ and hence
$|\mathscr{S}_\lambda|=2^{|\lambda|}$.

Since $\left|\mathscr{P}_{<\omega}(\lambda)\right|=
\left|\mathscr{P}_{<\omega}(\lambda)\times
\mathscr{P}_{<\omega}(\lambda)\right|=\lambda$ we conclude that
Theorem~2.20 from \cite{CP} and
Proposition~\ref{proposition-2.1}$(viii)$ imply that
\begin{equation*}
    \left|\mathscr{I}^{\mathrm{cf}}_\lambda\right|=
    \left|\mathscr{P}_{<\omega}(\lambda)\times
    \mathscr{P}_{<\omega}(\lambda)
    \times\mathscr{S}_\lambda\right|=
    \left|\mathscr{P}_{<\omega}(\lambda)\times
    \mathscr{P}_{<\omega}(\lambda)\right|
    \cdot|\mathscr{S}_\lambda|=
    |\lambda|\cdot 2^{|\lambda|}=
    2^{|\lambda|}.
\end{equation*}
\end{proof}

\begin{proposition}\label{proposition-2.4}
For every $\alpha,\beta\in\mathscr{I}^{\mathrm{cf}}_\lambda$, both
sets
\begin{equation*}
\left\{\chi\in\mathscr{I}^{\mathrm{cf}}_\lambda\mid
\alpha\cdot\chi=\beta\right\}\qquad \mbox{ and } \qquad
\left\{\chi\in\mathscr{I}^{\mathrm{cf}}_\lambda\mid
\chi\cdot\alpha=\beta\right\}
\end{equation*}
are finite. Consequently, every right translation and every left
translation by an element of the semigroup
$\mathscr{I}^{\mathrm{cf}}_\lambda$ is a finite-to-one map.
\end{proposition}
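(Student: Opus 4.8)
The plan is to prove the finiteness of the first set directly, by showing that any $\chi$ solving $\alpha\chi=\beta$ is forced to agree with a fixed partial map on a \emph{cofinite} subset of $\lambda$, so that only finitely many possibilities remain; then deduce the second set's finiteness from the first via the inversion anti-automorphism; and finally read off the assertion about translations.

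First I would fix $\alpha,\beta\in\mathscr{I}^{\mathrm{cf}}_\lambda$ and unwind the equation $\alpha\chi=\beta$ using the definition of the product in $\mathscr{I}_\lambda$. This gives $\operatorname{dom}\beta=\operatorname{dom}(\alpha\chi)=\{x\in\operatorname{dom}\alpha\mid (x)\alpha\in\operatorname{dom}\chi\}$, so in particular $\operatorname{dom}\beta\subseteq\operatorname{dom}\alpha$ and $\bigl((x)\alpha\bigr)\chi=(x)\beta$ for every $x\in\operatorname{dom}\beta$. Hence $\chi$ is completely determined on $D_0:=(\operatorname{dom}\beta)\alpha$, and its restriction there is a fixed injection with image $(\operatorname{dom}\beta)\beta=\operatorname{ran}\beta$. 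The key point is that $D_0$ is cofinite in $\lambda$: since $\operatorname{dom}\beta$ and $\operatorname{dom}\alpha$ are both cofinite and $\operatorname{dom}\beta\subseteq\operatorname{dom}\alpha$, the set $\operatorname{dom}\beta$ is cofinite in $\operatorname{dom}\alpha$; as $\alpha$ restricts to a bijection $\operatorname{dom}\alpha\to\operatorname{ran}\alpha$, its image $D_0$ is cofinite in $\operatorname{ran}\alpha$, which in turn is cofinite in $\lambda$.

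All remaining freedom therefore lives over the finite set $\lambda\setminus D_0$. Writing $\operatorname{dom}\chi=D_0\sqcup F$ with $F\subseteq\lambda\setminus D_0$, there are only finitely many choices for $F$; and since $\chi$ is injective and already exhausts $\operatorname{ran}\beta$ on $D_0$, the restriction $\chi|_F$ must be an injection of the finite set $F$ into the finite set $\lambda\setminus\operatorname{ran}\beta$, of which there are again only finitely many. So there are only finitely many candidates for $\chi$, whence $\{\chi\mid\alpha\chi=\beta\}$ is finite. For the second set I would use that $\mathscr{I}^{\mathrm{cf}}_\lambda$ is an inverse semigroup, so $\chi\alpha=\beta$ if and only if $\alpha^{-1}\chi^{-1}=\beta^{-1}$; as inversion is a bijection of $\mathscr{I}^{\mathrm{cf}}_\lambda$ onto itself, the assignment $\chi\mapsto\chi^{-1}$ maps $\{\chi\mid\chi\alpha=\beta\}$ bijectively onto $\{\psi\mid\alpha^{-1}\psi=\beta^{-1}\}$, which is finite by the first part. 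The ``consequently'' clause is then immediate, since the full preimage of $\beta$ under the right (respectively, left) translation by $\alpha$ is exactly the second (respectively, first) of these two sets.

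The only step that requires genuine care — the ``hard part'', such as it is — is the bookkeeping with domains and ranges establishing that $\chi$ is pinned down on a cofinite set: one must correctly derive $\operatorname{dom}\beta\subseteq\operatorname{dom}\alpha$, verify the cofiniteness of $(\operatorname{dom}\beta)\alpha$, and make sure no additional freedom is hidden in the behaviour of $\chi$ off $D_0$ beyond the finitely many choices already counted. The rest is routine counting of maps between finite sets.
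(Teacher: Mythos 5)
Your proof is correct and follows essentially the same route as the paper's: both arguments pin $\chi$ down on a cofinite subset of $\lambda$ (the paper does this by noting that every solution of $\alpha\chi=\beta$ solves $\alpha^{-1}\alpha\chi=\alpha^{-1}\beta$, so $\chi$ restricted to $\operatorname{dom}(\alpha^{-1}\alpha)=\operatorname{ran}\alpha$ equals the fixed map $\alpha^{-1}\beta$), and then conclude that only finitely many extensions are possible. You are merely more explicit about the finite counting off the cofinite set and about deducing the second set's finiteness via inversion, which the paper leaves implicit.
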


\begin{proof}
We denote
\begin{equation*}
A=\left\{\chi\in\mathscr{I}^{\mathrm{cf}}_\lambda\mid
\alpha\cdot\chi=\beta\right\} \quad \hbox{and} \quad
B=\left\{\chi\in\mathscr{I}^{\mathrm{cf}}_\lambda\mid
\alpha^{-1}\cdot\alpha\cdot\chi=\alpha^{-1}\cdot\beta\right\}.
\end{equation*}
Then $A\subseteq B$ and the restriction of any partial map $\chi\in B$
onto $\operatorname{dom}(\alpha^{-1}\cdot\alpha)$ coincides with the
partial map $\alpha^{-1}\cdot\beta$. Since every partial map from
$\mathscr{I}^{\mathrm{cf}}_\lambda$ has cofinite range and cofinite
domain we conclude that the set $B$ is finite and hence so is $A$.
\end{proof}

\begin{proposition}\label{proposition-2.5}
Each maximal chain $L$ of idempotents in $\mathscr{I}^{\mathrm{cf}}_\lambda$ coincides with the idempotent band $E(S)$ of a bicyclic subsemigroup
$S$ of $\mathscr{I}^{\mathrm{cf}}_\lambda$.
\end{proposition}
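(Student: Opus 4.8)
The plan is to use Proposition~\ref{proposition-2.1}$(v)$ to pin down the structure of $L$ and then to realize $L$ as the band $E(S)$ of the bicyclic subsemigroup generated by a single ``shift'' map. First I would describe $L$ explicitly. By Proposition~\ref{proposition-2.1}$(v)$ the maximal chain $L$ is an $\omega$-chain, so it has a greatest element; since $\mathbb{I}$ is the greatest idempotent of $\mathscr{I}^{\mathrm{cf}}_\lambda$, maximality of $L$ forces $\mathbb{I}\in L$, and hence $L=\{\varepsilon_0>\varepsilon_1>\varepsilon_2>\cdots\}$ with $\varepsilon_0=\mathbb{I}$. By Proposition~\ref{proposition-2.1}$(iii)$ the sets $\lambda\setminus\operatorname{dom}\varepsilon_n$ form a strictly increasing chain of finite subsets of $\lambda$, and using Proposition~\ref{proposition-2.1}$(iv)$ one sees that if $\bigl|\lambda\setminus\operatorname{dom}\varepsilon_{n+1}\bigr|\geqslant\bigl|\lambda\setminus\operatorname{dom}\varepsilon_{n}\bigr|+2$ for some $n$, then an idempotent can be inserted strictly between $\varepsilon_{n+1}$ and $\varepsilon_{n}$ in the natural order, contradicting maximality of $L$. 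Hence $\bigl|\lambda\setminus\operatorname{dom}\varepsilon_n\bigr|=n$ for every $n$, so there are pairwise distinct points $x_1,x_2,\ldots$ of $\lambda$ with $\operatorname{dom}\varepsilon_n=\lambda\setminus\{x_1,\ldots,x_n\}=:A_n$; I then set $Y=\lambda\setminus\{x_n\mid n\geqslant 1\}=\bigcap_{n\in\omega}A_n$.

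Next I would introduce the generator $\gamma\in\mathscr{I}^{\mathrm{cf}}_\lambda$ defined by $(y)\gamma=y$ for $y\in Y$ and $(x_n)\gamma=x_{n+1}$ for $n\geqslant 1$. Then $\gamma$ is an injective selfmap of $\lambda$ with $\operatorname{dom}\gamma=\lambda$ and $\operatorname{ran}\gamma=\lambda\setminus\{x_1\}$, so $\gamma\in\mathscr{I}^{\mathrm{cf}}_\lambda$. A straightforward induction shows that each $\gamma^{n}$ is a total map with $\operatorname{ran}\gamma^{n}=Y\cup\{x_{n+1},x_{n+2},\ldots\}=A_n$, and hence $\gamma^{-n}\gamma^{n}=\operatorname{id}_{A_n}=\varepsilon_n$ for every $n\geqslant 0$; in particular $\gamma\gamma^{-1}=\operatorname{id}_{\lambda}=\mathbb{I}$.

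Finally, since $\mathscr{I}^{\mathrm{cf}}_\lambda$ is a monoid with identity $\mathbb{I}$ and $\gamma\gamma^{-1}=\mathbb{I}$, the defining presentation of the bicyclic semigroup gives a homomorphism $\varphi\colon\mathscr{C}(p,q)\to\mathscr{I}^{\mathrm{cf}}_\lambda$ with $(p)\varphi=\gamma$ and $(q)\varphi=\gamma^{-1}$, whose image is $S:=\langle\gamma,\gamma^{-1}\rangle$. By \cite[Corollary~1.32]{CP}, $\varphi$ is either an isomorphism or $S$ is a cyclic group; the second alternative is excluded because $S$ contains the infinitely many pairwise distinct idempotents $(q^np^n)\varphi=\gamma^{-n}\gamma^{n}=\varepsilon_n$, $n\geqslant 0$. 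Thus $S$ is a bicyclic subsemigroup of $\mathscr{I}^{\mathrm{cf}}_\lambda$, and since $E(\mathscr{C}(p,q))=\{q^np^n\mid n\geqslant 0\}$ we obtain $E(S)=\{\varepsilon_n\mid n\geqslant 0\}=L$, as required. The main obstacle, I expect, is the combinatorial observation that a maximal chain of idempotents has ``gaps of size exactly one'' (via Proposition~\ref{proposition-2.1}$(iii)$--$(v)$); once that is in hand, a single shift map does the job and the remaining verifications — that $\operatorname{ran}\gamma^{n}=A_n$ and that $\varphi$ is injective — are routine.
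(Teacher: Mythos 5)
Your proposal is correct and follows essentially the same route as the paper: both arguments use Proposition~\ref{proposition-2.1}$(iii)$--$(v)$ to show that a maximal chain has singleton gaps $\operatorname{dom}\varepsilon_n\setminus\operatorname{dom}\varepsilon_{n+1}=\{x_{n+1}\}$, and then realize $L$ as the idempotents of the bicyclic subsemigroup generated by a shift along the points $x_1,x_2,\ldots$ that fixes the remaining points. The only cosmetic difference is that you use a single total generator $\gamma$ together with the presentation $pq=1$ and Corollary~1.32 of \cite{CP}, whereas the paper defines the two partial shifts $\alpha,\beta$ directly and appeals to the concrete model of the bicyclic semigroup in Remark~\ref{remark-1.1}.
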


\begin{proof}
By Proposition~\ref{proposition-2.1}$(iii)$, the chain $L$ can be written as $L=\{\varepsilon_n\}_{n=1}^\infty$ where
$\varepsilon_1>\varepsilon_2>\cdots>\varepsilon_n>\cdots$. Since
every infinite subchain of an $\omega$-chain is also an $\omega$-chain
 we have that Proposition~\ref{proposition-2.1}$(v)$ implies that
$L$ is an $\omega$-chain. Then by
Proposition~\ref{proposition-2.1}$(iii)$ we get that
$\operatorname{dom}\varepsilon_i\setminus
\operatorname{dom}\varepsilon_{i+1}\neq\varnothing$ for all positive
integers $i$. Also, the maximality of $L$ implies that the set
$\operatorname{dom}\varepsilon_i\setminus
\operatorname{dom}\varepsilon_{i+1}$ is a singleton for all positive
integers $i$. For every positive integer $i$ we put
$\{x_i\}=\operatorname{dom}\varepsilon_i\setminus
\operatorname{dom}\varepsilon_{i+1}$. Then we put
$D=\operatorname{dom}\varepsilon_1\setminus
\bigcup_{i\in{\mathbb{N}}}\{x_i\}$ and define the partial maps
$\alpha\colon\lambda\rightharpoonup\lambda$ and
$\beta\colon\lambda\rightharpoonup\lambda$ as follows:
\begin{equation*}
    (x)\alpha=
\left\{
  \begin{array}{ll}
    x_{n+1}, & \hbox{ if } \, x=x_n\in\operatorname{dom}
    \varepsilon_1\setminus D \, \hbox{ and } \,n\geqslant 1;\\
    x, & \hbox{ if } \, x\in D;
  \end{array}
\right.
\end{equation*}
and
\begin{equation*}
    (x)\beta=
\left\{
  \begin{array}{ll}
    x_{n-1}, & \hbox{ if } \, x=x_n\in\operatorname{dom}
    \varepsilon_1\setminus D \, \hbox{ and } \, n>1;\\
    x, & \hbox{ if } \, x\in D.
  \end{array}
\right.
\end{equation*}
Since the set $\lambda\setminus\operatorname{dom}\varepsilon_1$ is
finite we have that
$\alpha,\beta\in\mathscr{I}^{\mathrm{cf}}_\lambda$ and
Remark~\ref{remark-1.1} implies the statement of our proposition.
\end{proof}

Proposition~\ref{proposition-2.5} and the Axiom of Choice imply the following
proposition.

\begin{proposition}\label{proposition-2.5a}
Each chain of idempotents in $\mathscr{I}^{\mathrm{cf}}_\lambda$ is contained in a bicyclic subsemigroup of $\mathscr{I}^{\mathrm{cf}}_\lambda$.
\end{proposition}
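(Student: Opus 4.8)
The plan is to deduce this from Proposition~\ref{proposition-2.5} by first enlarging an arbitrary chain to a maximal one. Let $C$ be a chain of idempotents in $\mathscr{I}^{\mathrm{cf}}_\lambda$. Consider the collection $\mathcal{L}$ of all chains of idempotents in $\mathscr{I}^{\mathrm{cf}}_\lambda$ that contain $C$, partially ordered by inclusion. If $\{L_t\}_{t\in T}$ is a subfamily of $\mathcal{L}$ which is totally ordered by inclusion, then its union $\bigcup_{t\in T}L_t$ is again a chain of idempotents (any two of its elements lie in a common $L_t$, hence are comparable in the natural partial order) and it contains $C$, so every totally ordered subfamily of $\mathcal{L}$ has an upper bound in $\mathcal{L}$. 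By Zorn's lemma, $\mathcal{L}$ possesses a maximal element $L$. Then $C\subseteq L$ and, by the maximality of $L$ in $\mathcal{L}$, the chain $L$ is a maximal chain in $E(\mathscr{I}^{\mathrm{cf}}_\lambda)$.

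Now we apply Proposition~\ref{proposition-2.5} to $L$: there is a bicyclic subsemigroup $S$ of $\mathscr{I}^{\mathrm{cf}}_\lambda$ with $E(S)=L$. Hence $C\subseteq L=E(S)\subseteq S$, which is exactly the assertion of the proposition.

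A minor point worth recording is that Proposition~\ref{proposition-2.5} is stated and proved for maximal chains, and its proof treats $L$ as an infinite $\omega$-chain; this causes no difficulty even when the original chain $C$ is finite, because by Proposition~\ref{proposition-2.1}$(v)$ every maximal chain in $E(\mathscr{I}^{\mathrm{cf}}_\lambda)$ is automatically an $\omega$-chain, so the chosen $L$ always meets the hypotheses of Proposition~\ref{proposition-2.5}. The only real ingredient beyond Proposition~\ref{proposition-2.5} is the verification that $\mathcal{L}$ is inductively ordered, i.e., the appeal to the Axiom of Choice; once that is in place nothing more is required, so there is no substantial obstacle here.
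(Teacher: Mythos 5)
Your proof is correct and follows exactly the route the paper intends: the paper derives Proposition~\ref{proposition-2.5a} from Proposition~\ref{proposition-2.5} together with the Axiom of Choice, i.e., by extending the given chain to a maximal chain and invoking Proposition~\ref{proposition-2.5}. You have merely written out the Zorn's lemma step that the paper leaves implicit, so nothing further is needed.
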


\begin{proposition}\label{proposition-2.6}
Let $\mathfrak{C}$ be a congruence on the semigroup
$\mathscr{I}^{\mathrm{cf}}_\lambda$. If there exist two
non-$\mathscr{H}$-equivalent elements
$\alpha,\beta\in\mathscr{I}^{\mathrm{cf}}_\lambda$ such that
$\alpha\mathfrak{C}\beta$, then $\mathfrak{C}$ is a group congruence
on $\mathscr{I}^{\mathrm{cf}}_\lambda$.
\end{proposition}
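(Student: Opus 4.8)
The plan is to show that if a congruence $\mathfrak{C}$ identifies two elements $\alpha,\beta$ lying in distinct $\mathscr{H}$-classes, then $\mathfrak{C}$ collapses enough of the idempotent structure to force $\mathscr{I}^{\mathrm{cf}}_\lambda/\mathfrak{C}$ to be a group; equivalently, all idempotents of $\mathscr{I}^{\mathrm{cf}}_\lambda$ become $\mathfrak{C}$-equivalent to the identity $\mathbb{I}$, since an inverse semigroup modulo a congruence that fuses all idempotents into one class is a group. First I would reduce to the case of idempotents: by Proposition~\ref{proposition-2.1}$(viii)$, $\alpha$ and $\beta$ being non-$\mathscr{H}$-equivalent means that $\operatorname{dom}\alpha\ne\operatorname{dom}\beta$ or $\operatorname{ran}\alpha\ne\operatorname{ran}\beta$. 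Multiplying the relation $\alpha\mathfrak{C}\beta$ on the left by $\alpha^{-1}$ and on the right by $\alpha^{-1}\alpha$, and using that $\alpha^{-1}\alpha\alpha^{-1}=\alpha^{-1}$, one produces a relation of the form $\varepsilon\mathfrak{C}\iota$ (or $\varepsilon\mathfrak{C}\chi$ with $\chi$ an element whose domain or range differs from that of the idempotent $\varepsilon$); a couple of such one-sided multiplications by $\alpha$, $\alpha^{-1}$, $\beta$, $\beta^{-1}$ should isolate two idempotents $\varepsilon,\iota\in E(\mathscr{I}^{\mathrm{cf}}_\lambda)$ with $\varepsilon\ne\iota$ that are $\mathfrak{C}$-equivalent. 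Since an inverse semigroup's congruence restricted to idempotents is preserved under the natural partial order in the sense that $\varepsilon\mathfrak{C}\iota$ implies $\varepsilon\mathfrak{C}(\varepsilon\iota)$ and $\varepsilon\iota<\varepsilon$ (because $\varepsilon\ne\iota$ forces $\varepsilon\iota\lneqq\varepsilon$ or $\varepsilon\iota\lneqq\iota$), we may assume we have idempotents $\varepsilon\gneqq\delta$ with $\varepsilon\mathfrak{C}\delta$.

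Next I would use Proposition~\ref{proposition-2.5a}: the two-element chain $\{\varepsilon,\delta\}$ lies in a bicyclic subsemigroup $S\cong\mathscr{C}(p,q)$ of $\mathscr{I}^{\mathrm{cf}}_\lambda$, and inside $S$ the band $E(S)$ is an $\omega$-chain whose top element $e$ is an idempotent of $\mathscr{I}^{\mathrm{cf}}_\lambda$. The congruence $\mathfrak{C}$ restricted to $S$ is a congruence on the bicyclic semigroup identifying two of its idempotents; by the structure of congruences on $\mathscr{C}(p,q)$ (every non-trivial congruence is a group congruence — recalled in the introduction after Remark~\ref{remark-1.1}), $\mathfrak{C}|_S$ must already identify \emph{all} idempotents of $S$, in particular $e\mathfrak{C}(qp)\mathfrak{C}\cdots$; so the top idempotent $e$ of $S$ is $\mathfrak{C}$-equivalent to idempotents of arbitrarily small domain inside $S$, hence $e\mathfrak{C}\eta$ for idempotents $\eta$ with $\operatorname{dom} e\setminus\operatorname{dom}\eta$ of any prescribed finite size contained in $\operatorname{dom} e$.

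Then I would bootstrap from one idempotent collapse to all of them. The key is that $\mathscr{I}^{\mathrm{cf}}_\lambda$ acts highly transitively: given any idempotent $\varepsilon$ with $|\lambda\setminus\operatorname{dom}\varepsilon|=n$, and the idempotent $e$ above together with some $\eta\leqslant e$ with $|\operatorname{dom} e\setminus\operatorname{dom}\eta|=n$, by the homogeneity used in the proof of Proposition~\ref{proposition-2.1}$(i)$ and $(ix)$ there is $\chi\in\mathscr{I}^{\mathrm{cf}}_\lambda$ conjugating the pair $(e,\eta)$ to the pair $(\mathbb{I},\varepsilon)$ (choose $\chi$ a bijection $\operatorname{dom} e\to\lambda$ carrying $\operatorname{dom}\eta$ onto $\operatorname{dom}\varepsilon$, extended appropriately); then $\chi^{-1}e\chi\,\mathfrak{C}\,\chi^{-1}\eta\chi$ gives $\mathbb{I}\,\mathfrak{C}\,\varepsilon$. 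Hence every idempotent is $\mathfrak{C}$-related to $\mathbb{I}$, so $E(\mathscr{I}^{\mathrm{cf}}_\lambda)$ collapses to a single $\mathfrak{C}$-class; for an inverse semigroup this is precisely the condition for $\mathscr{I}^{\mathrm{cf}}_\lambda/\mathfrak{C}$ to be a group, so $\mathfrak{C}$ is a group congruence.

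The main obstacle I anticipate is the first step: carefully extracting, from a single relation $\alpha\mathfrak{C}\beta$ between non-$\mathscr{H}$-equivalent elements, a \emph{strict} inequality $\varepsilon\gneqq\delta$ between $\mathfrak{C}$-equivalent idempotents. Multiplying by inverses always lands one inside $E$, but one must check that $\operatorname{dom}\alpha\ne\operatorname{dom}\beta$ (WLOG by symmetry, using $\mathscr{R}$ versus $\mathscr{L}$ and Proposition~\ref{proposition-2.1}$(vi)$–$(vii)$) genuinely survives as a difference of domains after the reduction, rather than degenerating; this is a finite bookkeeping argument on the sets $\lambda\setminus\operatorname{dom}$ and $\lambda\setminus\operatorname{ran}$, and it is where one actually uses that $\alpha,\beta$ are \emph{not} $\mathscr{H}$-equivalent, not merely distinct. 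Everything after that is a routine combination of the bicyclic congruence dichotomy and the transitivity already established in Proposition~\ref{proposition-2.1}.
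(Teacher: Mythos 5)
Your proposal is correct in substance and follows the same overall skeleton as the paper: reduce to two distinct comparable $\mathfrak{C}$-equivalent idempotents, place them in a bicyclic subsemigroup via Proposition~\ref{proposition-2.5a}, invoke the congruence dichotomy for the bicyclic semigroup, spread the collapse to all idempotents, and conclude from the fact that an inverse semigroup whose quotient has a single idempotent is a group. The genuinely different ingredient is your spreading step. The paper handles an arbitrary idempotent $\nu$ intrinsically: it multiplies the already collapsed maximal chain $L$ by $\nu$ to obtain an infinite collapsed chain $\nu L$ below $\nu$, adjoins $\nu$ and $\mathbb{I}$, extends to a second maximal chain, and applies the bicyclic dichotomy a \emph{second} time to get $\nu\mathfrak{C}\mathbb{I}$. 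You apply the dichotomy only once and then transport the resulting relation $e\mathfrak{C}\eta$ by conjugation $\chi^{-1}(\cdot)\chi$ with a suitable partial bijection $\chi$, exploiting the homogeneity of $\mathscr{I}^{\mathrm{cf}}_\lambda$. This is equally valid and arguably more transparent, at the cost of some care in choosing $\chi$.

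Two loose ends are easily repaired. First, the reduction to idempotents, which you flag as the main obstacle, is cleaner than you fear: from $\alpha\mathfrak{C}\beta$ one gets $\alpha^{-1}\mathfrak{C}\beta^{-1}$, hence $\alpha\alpha^{-1}\mathfrak{C}\alpha\beta^{-1}\mathfrak{C}\beta\beta^{-1}$ and likewise $\alpha^{-1}\alpha\mathfrak{C}\beta^{-1}\beta$; non-$\mathscr{H}$-equivalence says precisely that at least one of these two pairs consists of \emph{distinct} idempotents, so no bookkeeping on complements of domains and ranges is needed (this is exactly the paper's second paragraph). Second, your claim that $E(S)$ contains idempotents $\eta$ with $|\operatorname{dom}e\setminus\operatorname{dom}\eta|$ equal to any prescribed $n$ holds when $E(S)$ is a \emph{maximal} chain (then consecutive domains differ by one point and in fact $e=\mathbb{I}$), which is what Proposition~\ref{proposition-2.5} supplies; for an arbitrary bicyclic subsemigroup the achievable sizes could skip values. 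Alternatively it suffices to produce some idempotent $\varepsilon'\leqslant\varepsilon$ with $\mathbb{I}\mathfrak{C}\varepsilon'$, since then $\varepsilon=\mathbb{I}\varepsilon\mathfrak{C}\varepsilon'\varepsilon=\varepsilon'\mathfrak{C}\mathbb{I}$. With these small adjustments your argument goes through.
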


\begin{proof}
First we suppose that $\alpha$ and $\beta$ are distinct idempotents
of the semigroup $\mathscr{I}^{\mathrm{cf}}_\lambda$. Without loss
of generality we can assume that $\alpha$ and $\beta$ are compatible
and $\alpha\leqslant\beta$. Otherwise, replace $\alpha$ by $\alpha\cdot\beta$. Then by Proposition~\ref{proposition-2.5a} there exists a maximal chain $L$
in $E(\mathscr{I}^{\mathrm{cf}}_\lambda)$ such that $L$ contains the
elements $\alpha$ and $\beta$, and hence $L$ contained in a bicyclic subsemigroup $S$ of $\mathscr{I}^{\mathrm{cf}}_\lambda$. Then Corollary~1.32 of~\cite{CP}
implies that $\varepsilon\mathfrak{C}\iota$ for all elements
$\varepsilon$ and $\iota$ of the chain $L$.

Let $\nu$ be an arbitrary idempotent of the semigroup
$\mathscr{I}^{\mathrm{cf}}_\lambda$. Obviously, if
$\varepsilon,\iota\in L$ such that $\varepsilon\leqslant\iota$ then
$\varepsilon\cdot\nu\leqslant\iota\cdot\nu$. Since  ${\uparrow}e$ is
a finite subset of the free semilattice with unity
$(\mathscr{P}_{<\omega}(\lambda),\subseteq)$ for any
$e\in(\mathscr{P}_{<\omega}(\lambda),\subseteq)$, we have that
Proposition~\ref{proposition-2.1}$(iv)$ implies that $\nu L$ is an
infinite chain in $E(\mathscr{I}^{\mathrm{cf}}_\lambda)$. Then we
have that $\varepsilon\mathfrak{C}\iota$ for all
$\varepsilon,\iota\in\nu L$. We put $L_\nu=\nu
L\cup\{\nu\}\cup\{\mathbb{I}\}$. Then $L_\nu$ is a chain in
$E(\mathscr{I}^{\mathrm{cf}}_\lambda)$. Therefore by
Proposition~\ref{proposition-2.5a} we get that there exists a
maximal chain $L_{\max}$ in $E(\mathscr{I}^{\mathrm{cf}}_\lambda)$
which contains the chain $L_\nu$ and $L_{\max}$ is a band of a bicyclic
subsemigroup $S$ in $\mathscr{I}^{\mathrm{cf}}_\lambda$. Now Corollary~1.32 of
\cite{CP} implies that $\varepsilon\mathfrak{C}\iota$ for all
elements $\varepsilon$ and $\iota$ of the chain $L_\nu$. Hence
$\nu\mathfrak{C}\mathbb{I}$ and $\alpha\mathfrak{C}\mathbb{I}$ imply
that $\nu\mathfrak{C}\alpha$. Therefore all idempotents of the
semigroup $\mathscr{I}^{\mathrm{cf}}_\lambda$ are
$\mathfrak{C}$-equivalent. Since the semigroup
$\mathscr{I}^{\mathrm{cf}}_\lambda$ is inverse we conclude that
quotient semigroup $\mathscr{I}^{\mathrm{cf}}_\lambda/\mathfrak{C}$
contains only one idempotent and by Lemma~II.1.10 from
\cite{Petrich1984} the semigroup
$\mathscr{I}^{\mathrm{cf}}_\lambda/\mathfrak{C}$ is a group.

Suppose that $\alpha$ and $\beta$ are distinct
non--$\mathscr{H}$-equivalent elements of the semigroup
$\mathscr{I}^{\mathrm{cf}}_\lambda$ such that
$\alpha\mathfrak{C}\beta$. Then Proposition~\ref{proposition-2.1}
implies that at least one of the following conditions holds:
\begin{equation*}
    \alpha\alpha^{-1}\neq\beta\beta^{-1} \qquad \mbox{ or }
    \qquad \alpha^{-1}\alpha\neq\beta^{-1}\beta.
\end{equation*}
By Lemma~III.1.1 from \cite{Petrich1984} we have that
$\alpha^{-1}\mathfrak{C}\beta^{-1}$. Then
$\alpha\alpha^{-1}\mathfrak{C}\alpha\beta^{-1}$ and
$\beta\beta^{-1}\mathfrak{C}\alpha\beta^{-1}$ and hence
$\alpha\alpha^{-1}\mathfrak{C}\beta\beta^{-1}$. Similarly we get
that $\alpha^{-1}\alpha\mathfrak{C} \beta^{-1}\beta$. Then the first
part of the proof implies that $\mathfrak{C}$ is a group congruence
on $\mathscr{I}^{\mathrm{cf}}_\lambda$.
\end{proof}

\begin{theorem}\label{theorem-2.7}
Every non-trivial congruence on the semigroup
$\mathscr{I}^{\mathrm{cf}}_\lambda$ is a group congruence.
\end{theorem}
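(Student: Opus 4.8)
The plan is to reduce the claim to Proposition~\ref{proposition-2.6} by showing that a non-trivial congruence must identify two elements that are not $\mathscr{H}$-equivalent. So let $\mathfrak{C}$ be a non-trivial congruence on $\mathscr{I}^{\mathrm{cf}}_\lambda$. Since $\mathfrak{C}$ is not the identity congruence, there exist distinct elements $\alpha,\beta\in\mathscr{I}^{\mathrm{cf}}_\lambda$ with $\alpha\mathfrak{C}\beta$. If $\alpha$ and $\beta$ are not $\mathscr{H}$-equivalent, then Proposition~\ref{proposition-2.6} immediately gives that $\mathfrak{C}$ is a group congruence and we are done. Hence the only case that requires work is the case $\alpha\mathscr{H}\beta$, i.e. $\operatorname{dom}\alpha=\operatorname{dom}\beta$ and $\operatorname{ran}\alpha=\operatorname{ran}\beta$ by Proposition~\ref{proposition-2.1}$(viii)$, but $\alpha\neq\beta$.

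In this remaining case I would first reduce to idempotents. Put $\varepsilon=\alpha\alpha^{-1}=\beta\beta^{-1}$ and $\iota=\alpha^{-1}\alpha=\beta^{-1}\beta$; then $\gamma:=\alpha^{-1}\beta$ and $\delta:=\beta^{-1}\alpha$ both lie in the group $\mathscr{H}$-class $H(\iota)$, which by Corollary~\ref{corollary-2.2} is isomorphic to $\mathscr{S}_\lambda$. From $\alpha\mathfrak{C}\beta$ and Lemma~III.1.1 of~\cite{Petrich1984} we get $\alpha^{-1}\mathfrak{C}\beta^{-1}$, hence $\iota=\alpha^{-1}\alpha\;\mathfrak{C}\;\alpha^{-1}\beta=\gamma$ and also $\iota\,\mathfrak{C}\,\beta^{-1}\alpha=\delta$; note $\gamma\neq\iota$ since $\alpha\neq\beta$ (as $\alpha=\beta$ would follow from $\alpha\gamma$-arguments: indeed $\alpha\gamma=\alpha\alpha^{-1}\beta=\varepsilon\beta=\beta$, so $\gamma=\iota$ would force $\beta=\alpha\iota=\alpha$). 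So we have produced an idempotent $\iota$ that is $\mathfrak{C}$-related to a non-idempotent element $\gamma$ of its own $\mathscr{H}$-class. Now I want to leverage this inside the group $H(\iota)\cong\mathscr{S}_\lambda$: translating by suitable elements of $H(\iota)$ we may replace $(\iota,\gamma)$ by $(\iota,\gamma')$ for any conjugate $\gamma'$ of $\gamma$ in $\mathscr{S}_\lambda$, and by taking products $\iota\,\mathfrak{C}\,\gamma$ iterated and multiplied by conjugates we see that $\iota\,\mathfrak{C}\,w$ for every $w$ in the normal subgroup of $\mathscr{S}_\lambda$ generated by $\gamma$; since $\mathscr{S}_\lambda$ has very few normal subgroups, this normal closure contains a non-trivial element supported on as small a set as we like. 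The key point I would establish is that the normal closure of any non-identity permutation in $\mathscr{S}_\lambda$ contains a transposition or at least a permutation that moves only finitely many points, whence — possibly after multiplying by a further element of $H(\iota)$ — we obtain that $\iota$ is $\mathfrak{C}$-related to some permutation $\tau\in H(\iota)$ with $\tau\neq\iota$ and with finite support.

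Having such a $\tau$, the final move is to break the $\mathscr{H}$-class. Pick a point $y\in\operatorname{dom}\iota$ moved by $\tau$, say $(y)\tau=z\neq y$, and let $\mu\in E(\mathscr{I}^{\mathrm{cf}}_\lambda)$ be the idempotent with $\operatorname{dom}\mu=\operatorname{dom}\iota\setminus\{z\}$, so $\mu\leqslant\iota$. Then $\mu\cdot\tau$ has domain $\operatorname{dom}\iota\setminus\{z\}$ and range $\operatorname{dom}\iota\setminus\{y\}$ (it agrees with $\tau$ on its domain), so $\operatorname{dom}(\mu\tau)\neq\operatorname{ran}(\mu\tau)$ while $\mu=\mu\iota\;\mathfrak{C}\;\mu\tau$. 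By Proposition~\ref{proposition-2.1}$(vi)$ and $(vii)$ the idempotent $\mu$ and the element $\mu\tau$ are not $\mathscr{H}$-equivalent, and they are distinct, so $\mathfrak{C}$ identifies two non-$\mathscr{H}$-equivalent elements and Proposition~\ref{proposition-2.6} finishes the proof. The main obstacle I anticipate is the group-theoretic step: controlling the normal closure of $\gamma$ in $\mathscr{S}_\lambda$ well enough to produce a permutation with finite support (or directly a transposition). One clean way around it, which I would prefer if it works, is to avoid normal-subgroup structure altogether: since $\gamma\neq\iota$, some point $y$ is moved by $\gamma$, say $(y)\gamma=z$ with $z\neq y$; then directly set $\mu\in E(\mathscr{I}^{\mathrm{cf}}_\lambda)$ with $\operatorname{dom}\mu=\operatorname{dom}\iota\setminus\{z\}$, and the same computation as above shows $\mu\;\mathfrak{C}\;\mu\gamma$ with $\mu$ and $\mu\gamma$ distinct and non-$\mathscr{H}$-equivalent, so Proposition~\ref{proposition-2.6} applies without any appeal to the structure of $\mathscr{S}_\lambda$ at all.
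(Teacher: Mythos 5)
Your final ``clean way'' is correct and is essentially the paper's own argument: both proofs handle the remaining case $\alpha\mathscr{H}\beta$ by producing a group-$\mathscr{H}$-class element $\gamma$ that is $\mathfrak{C}$-related to, but distinct from, its identity idempotent, and then multiplying by an idempotent whose domain omits one point moved by $\gamma$ so as to break $\mathscr{H}$-equivalence and invoke Proposition~\ref{proposition-2.6}; the only differences are that the paper first transfers to the group of units $H(\mathbb{I})$ via Theorem~2.20 of \cite{CP} whereas you stay inside $H(\iota)$ with $\gamma=\alpha^{-1}\beta$ (arguably cleaner), and that your excursion through normal closures in $\mathscr{S}_\lambda$ is, as you yourself observe, entirely dispensable. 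One small bookkeeping slip: with the paper's right-action convention one gets $\operatorname{ran}(\mu\tau)=\operatorname{dom}\iota\setminus\{(z)\tau\}$ rather than $\operatorname{dom}\iota\setminus\{y\}$, but since $(y)\tau=z\neq y$ and $\tau$ is injective we must have $(z)\tau\neq z$, so $\operatorname{ran}(\mu\tau)\neq\operatorname{ran}\mu$ and the conclusion that $\mu$ and $\mu\tau$ are distinct and not $\mathscr{H}$-equivalent still stands.
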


\begin{proof}
Let $\mathfrak{C}$ be a non-trivial congruence on the semigroup
$\mathscr{I}^{\mathrm{cf}}_\lambda$. Let $\alpha$ and $\beta$ be
distinct $\mathfrak{C}$-equivalent elements of the semigroup
$\mathscr{I}^{\mathrm{cf}}_\lambda$. If the elements $\alpha$ and
$\beta$ are not $\mathscr{H}$-equivalent then
Proposition~\ref{proposition-2.6} implies the statement of the
theorem.

Suppose that $\alpha\mathscr{H}\beta$. Then Theorem~2.20 from
\cite{CP} implies that without loss of generality we can assume that
$\alpha$ and $\beta$ are elements of the group of units
$H(\mathbb{I})$ of the semigroup $\mathscr{I}^{\mathrm{cf}}_\lambda$
and hence $\mathbb{I}\mathfrak{C}(\beta\alpha^{-1})$. We denote
$\gamma=\beta\alpha^{-1}$. Since $\mathbb{I}\neq\gamma$ we conclude
that there exists $x_0\in\lambda$ such that $(x_0)\gamma\neq x_0$.
We define $\varepsilon$ to be an identity selfmap of the set
$\lambda\setminus\{x_0\}$. Then
$\varepsilon\in\mathscr{I}^{\mathrm{cf}}_\lambda$ and
$(\varepsilon\cdot\mathbb{I})\mathfrak{C}(\varepsilon\cdot\gamma)$.
Since $(x_0)\gamma\neq x_0$ we have that
Proposition~\ref{proposition-2.1}$(viii)$ implies that the elements
$\varepsilon$ and $\varepsilon\cdot\gamma$ are not
$\mathscr{H}$-equivalent. Then by Proposition~\ref{proposition-2.6}
we get that $\mathfrak{C}$ is a group congruence on
$\mathscr{I}^{\mathrm{cf}}_\lambda$.
\end{proof}


\section{On the least group congruence on the semigroup $\mathscr{I}^{\mathrm{cf}}_\lambda$}

Every inverse semigroup $S$ admits the least group congruence $\sigma$ (see \cite[Section~III]{Petrich1984}):
\begin{equation*}
    s\sigma t \quad \hbox{if and only if \quad there exists an idempotent} \quad e\in S \quad \hbox{such that} \quad se=te.
\end{equation*}

Theorem~\ref{theorem-2.7} implies that every non-injective homomorphism $h\colon\mathscr{I}^{\mathrm{cf}}_\lambda \rightarrow S$ from the semigroup $\mathscr{I}^{\mathrm{cf}}_\lambda$ into an arbitrary semigroup $S$ generates a group congruence $\mathfrak{h}$ on $\mathscr{I}^{\mathrm{cf}}_\lambda$. In this section we describe the structure of the quotient semigroup $\mathscr{I}^{\mathrm{cf}}_\lambda/\sigma$.

\begin{proposition}\label{proposition-3.1}
If $\alpha\sigma\beta$ in $\mathscr{I}^{\mathrm{cf}}_\lambda$ then
\begin{equation*}
    |\lambda\setminus\operatorname{dom}\alpha|- |\lambda\setminus\operatorname{ran}\alpha|= |\lambda\setminus\operatorname{dom}\beta|- |\lambda\setminus\operatorname{ran}\beta|.
\end{equation*}
\end{proposition}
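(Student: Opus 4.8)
The plan is to first define, for each $\gamma\in\mathscr{I}^{\mathrm{cf}}_\lambda$, the integer
\[
    d(\gamma)=|\lambda\setminus\operatorname{dom}\gamma|-|\lambda\setminus\operatorname{ran}\gamma|,
\]
which is well defined since both cardinals are finite. The statement then amounts to showing that $d$ is constant on $\sigma$-classes. Since $\sigma$ is generated by the relation ``$se=te$ for some idempotent $e$'', and since $\sigma$ is an equivalence relation, it suffices to prove the following: (a) $d(\alpha)=d(\beta)$ whenever $\alpha\varepsilon=\beta\varepsilon$ for some idempotent $\varepsilon$; this will in fact reduce to (b) $d(\alpha\varepsilon)=d(\alpha)$ for every idempotent $\varepsilon$, because if $\alpha\varepsilon=\beta\varepsilon$ then $d(\alpha)=d(\alpha\varepsilon)=d(\beta\varepsilon)=d(\beta)$.

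For step (b) I would compute directly with domains and ranges. If $\varepsilon$ is an idempotent, then by Proposition~\ref{proposition-2.1}$(ii)$ it is the identity map on its domain $\operatorname{dom}\varepsilon$, and $\lambda\setminus\operatorname{dom}\varepsilon$ is a finite set, say of cardinality $k$. For $\alpha\in\mathscr{I}^{\mathrm{cf}}_\lambda$ one has
\[
    \operatorname{dom}(\alpha\varepsilon)=\{x\in\operatorname{dom}\alpha\mid (x)\alpha\in\operatorname{dom}\varepsilon\}=\operatorname{dom}\alpha\setminus (\operatorname{ran}\alpha\setminus\operatorname{dom}\varepsilon)\alpha^{-1},
\]
and $\operatorname{ran}(\alpha\varepsilon)=\operatorname{ran}\alpha\cap\operatorname{dom}\varepsilon=\operatorname{ran}\alpha\setminus(\operatorname{ran}\alpha\setminus\operatorname{dom}\varepsilon)$. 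Now $\operatorname{ran}\alpha\setminus\operatorname{dom}\varepsilon$ is a finite set — it is contained in $\lambda\setminus\operatorname{dom}\varepsilon$ — say of cardinality $m\leqslant k$; since $\alpha$ is injective, the preimage $(\operatorname{ran}\alpha\setminus\operatorname{dom}\varepsilon)\alpha^{-1}$ has the same cardinality $m$. Hence passing from $\alpha$ to $\alpha\varepsilon$ removes exactly $m$ points from the domain and exactly $m$ points from the range, so $|\lambda\setminus\operatorname{dom}(\alpha\varepsilon)|=|\lambda\setminus\operatorname{dom}\alpha|+m$ and likewise for the range, and therefore $d(\alpha\varepsilon)=d(\alpha)$. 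This proves (b), hence (a), hence the Proposition.

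The only point requiring a little care is the bookkeeping that the two ``removed'' finite sets really have the same cardinality, which rests squarely on the injectivity of $\alpha$ (so that $\alpha^{-1}$ restricted to the finite set $\operatorname{ran}\alpha\setminus\operatorname{dom}\varepsilon$ is a bijection onto its image) and on the fact that all the sets involved are cofinite, so the complements are finite and subtraction of cardinals makes sense. I do not expect any genuine obstacle here; the main thing is to write the domain/range identities for a composition in $\mathscr{I}_\lambda$ cleanly and then count.
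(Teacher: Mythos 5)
Your proposal is correct and follows essentially the same route as the paper: reduce $\alpha\sigma\beta$ to the single claim $d(\alpha\varepsilon)=d(\alpha)$ for an idempotent $\varepsilon$ with $\alpha\varepsilon=\beta\varepsilon$, and verify that claim by noting that passing from $\alpha$ to $\alpha\varepsilon$ deletes equinumerous finite sets from the domain and the range. The only difference is that you carry out the counting explicitly, while the paper asserts the key equality after normalizing $\varepsilon\leqslant\alpha^{-1}\alpha$.
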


\begin{proof}
Let $\varepsilon$ be an idempotent of the semigroup $\mathscr{I}^{\mathrm{cf}}_\lambda$ such that $\alpha\varepsilon=\beta\varepsilon$. We shall show that the statement of the proposition holds for the elements $\alpha$ and $\alpha\varepsilon$.

Without loss of generality we can assume that $\varepsilon\leqslant \alpha^{-1}\alpha$, i.e., $\operatorname{dom}\varepsilon\subseteq \operatorname{dom}(\alpha^{-1}\alpha)$. Since $\alpha$ is an injective partial map with $|\lambda\setminus\operatorname{dom}\alpha|<\infty$ and $|\lambda\setminus\operatorname{ran}\alpha|<\infty$, and $\varepsilon$ is an identity map of the cofinite subset $\operatorname{dom}\varepsilon$ in $\lambda$ we conclude that
\begin{equation*}
    |\lambda\setminus\operatorname{dom}\alpha|- |\lambda\setminus\operatorname{ran}\alpha|= |\lambda\setminus\operatorname{dom}(\alpha\varepsilon)|- |\lambda\setminus\operatorname{ran}(\alpha\varepsilon)|.
\end{equation*}
This implies the statement of the proposition.
\end{proof}

For an arbitrary element $\alpha$ of the semigroup $\mathscr{I}^{\mathrm{cf}}_\lambda$ we denote
\begin{equation*}
    \overline{d}(\alpha)=|\lambda\setminus\operatorname{dom}\alpha| \qquad \hbox{and} \qquad \overline{r}(\alpha)=|\lambda\setminus\operatorname{ran}\alpha|.
\end{equation*}

\begin{proposition}\label{proposition-3.2}
If $\alpha$ and $\beta$ are arbitrary elements of the semigroup $\mathscr{I}^{\mathrm{cf}}_\lambda$ then
\begin{equation*}
    \overline{d}(\alpha\beta)-\overline{r}(\alpha\beta)=
    \overline{d}(\alpha)-\overline{r}(\alpha)+
    \overline{d}(\beta)-\overline{r}(\beta).
\end{equation*}
\end{proposition}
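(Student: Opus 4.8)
The plan is to set up an exact bookkeeping of the ``defect'' $\overline{d}(\cdot)-\overline{r}(\cdot)$ under composition by analysing how points are lost from domains and ranges when $\alpha$ and $\beta$ are composed. Recall that $\operatorname{dom}(\alpha\beta)=\{x\in\operatorname{dom}\alpha\mid (x)\alpha\in\operatorname{dom}\beta\}$ and $\operatorname{ran}(\alpha\beta)=\{(y)\beta\mid y\in\operatorname{ran}\alpha\cap\operatorname{dom}\beta\}$. First I would reduce to computing the two quantities $|\operatorname{dom}\alpha\setminus\operatorname{dom}(\alpha\beta)|$ and $|\operatorname{ran}\beta\setminus\operatorname{ran}(\alpha\beta)|$: since all of $\operatorname{dom}\alpha$, $\operatorname{dom}\beta$, $\operatorname{ran}\alpha$, $\operatorname{ran}\beta$ are cofinite, everything in sight is a finite set, so the cardinal arithmetic is ordinary integer arithmetic and the telescoping identities
\begin{equation*}
\overline{d}(\alpha\beta)=\overline{d}(\alpha)+|\operatorname{dom}\alpha\setminus\operatorname{dom}(\alpha\beta)|,\qquad
\overline{r}(\alpha\beta)=\overline{r}(\beta)+|\operatorname{ran}\beta\setminus\operatorname{ran}(\alpha\beta)|
\end{equation*}
hold. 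Subtracting, the claim becomes equivalent to
\begin{equation*}
|\operatorname{dom}\alpha\setminus\operatorname{dom}(\alpha\beta)|-|\operatorname{ran}\beta\setminus\operatorname{ran}(\alpha\beta)|=\overline{d}(\beta)-\overline{r}(\alpha)=|\lambda\setminus\operatorname{dom}\beta|-|\lambda\setminus\operatorname{ran}\alpha|.
\end{equation*}

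The heart of the argument is then to identify both sides with $|\operatorname{ran}\alpha\setminus\operatorname{dom}\beta|-|\operatorname{dom}\beta\setminus\operatorname{ran}\alpha|$. For the left-hand side: $\alpha$ restricts to a bijection from $\operatorname{dom}\alpha\setminus\operatorname{dom}(\alpha\beta)$ onto $\operatorname{ran}\alpha\setminus\operatorname{dom}\beta$, so the first term equals $|\operatorname{ran}\alpha\setminus\operatorname{dom}\beta|$; dually, $\beta$ restricts to a bijection from $\operatorname{dom}\beta\setminus\operatorname{ran}\alpha$ onto $\operatorname{ran}\beta\setminus\operatorname{ran}(\alpha\beta)$, so the second term equals $|\operatorname{dom}\beta\setminus\operatorname{ran}\alpha|$. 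For the right-hand side: write $\operatorname{ran}\alpha$ and $\operatorname{dom}\beta$ as complements of finite sets in $\lambda$ and use that for cofinite subsets $A,B\subseteq\lambda$ one has $|\lambda\setminus B|-|\lambda\setminus A|=|A\setminus B|-|B\setminus A|$ (again just finite set arithmetic, via $A\setminus B=(\lambda\setminus B)\setminus(\lambda\setminus A)$ and symmetrically). Combining the two identifications gives the result.

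I expect the main obstacle to be purely bookkeeping: carefully verifying that the two restriction maps named above really are bijections onto the stated sets, i.e. that no point is double-counted and that the images are exactly as claimed. Concretely, one must check that $x\in\operatorname{dom}\alpha$ fails to lie in $\operatorname{dom}(\alpha\beta)$ precisely when $(x)\alpha\in\operatorname{ran}\alpha\setminus\operatorname{dom}\beta$, and that every $z\in\operatorname{ran}\beta\setminus\operatorname{ran}(\alpha\beta)$ has its unique $\beta$-preimage in $\operatorname{dom}\beta\setminus\operatorname{ran}\alpha$ — both follow directly from injectivity of $\alpha$ and $\beta$ and the explicit description of $\operatorname{dom}(\alpha\beta)$ and $\operatorname{ran}(\alpha\beta)$, but they should be spelled out. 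Once these bijections are in place, the rest is a short chain of equalities between integers, and no further semigroup-theoretic input is needed beyond the cofiniteness built into the definition of $\mathscr{I}^{\mathrm{cf}}_\lambda$.
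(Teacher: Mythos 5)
Your proposal is correct. The two telescoping identities $\overline{d}(\alpha\beta)=\overline{d}(\alpha)+|\operatorname{dom}\alpha\setminus\operatorname{dom}(\alpha\beta)|$ and $\overline{r}(\alpha\beta)=\overline{r}(\beta)+|\operatorname{ran}\beta\setminus\operatorname{ran}(\alpha\beta)|$ hold because $\operatorname{dom}(\alpha\beta)\subseteq\operatorname{dom}\alpha$ and $\operatorname{ran}(\alpha\beta)\subseteq\operatorname{ran}\beta$, the two restriction maps are indeed bijections onto $\operatorname{ran}\alpha\setminus\operatorname{dom}\beta$ and from $\operatorname{dom}\beta\setminus\operatorname{ran}\alpha$ respectively (by injectivity and the explicit description of $\operatorname{dom}(\alpha\beta)$), and the finite-set identity $|\lambda\setminus B|-|\lambda\setminus A|=|A\setminus B|-|B\setminus A|$ for cofinite $A,B$ closes the loop. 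The underlying bookkeeping is the same as the paper's, but the organization differs: the paper splits into four cases according to whether $\operatorname{ran}\alpha\subseteq\operatorname{dom}\beta$, $\operatorname{dom}\beta\subseteq\operatorname{ran}\alpha$, the complements meet without either containment, or the complements are disjoint, and in each case asserts the values of $\overline{d}(\alpha\beta)$ and $\overline{r}(\alpha\beta)$ as following "from the definition." Your quantities $|\operatorname{ran}\alpha\setminus\operatorname{dom}\beta|$ and $|\operatorname{dom}\beta\setminus\operatorname{ran}\alpha|$ are exactly the paper's $k$ and $i$ from its case (3), so your single identity subsumes all four cases at once (the containment cases are just the cases where one of these numbers vanishes). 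What your version buys is uniformity and an actual justification of the counting step via the named bijections, which the paper leaves implicit; what the paper's version buys is only that each case is a more immediately visible concrete computation. Either way the proof is complete; just make sure to spell out, as you indicate, that $x\in\operatorname{dom}\alpha$ lies outside $\operatorname{dom}(\alpha\beta)$ precisely when $(x)\alpha\notin\operatorname{dom}\beta$, and that the $\beta$-preimage of a point of $\operatorname{ran}\beta\setminus\operatorname{ran}(\alpha\beta)$ lies outside $\operatorname{ran}\alpha$.
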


\begin{proof}
We consider four cases.

$(1)$~First we consider the case when $\operatorname{ran}\alpha\subseteq \operatorname{dom}\beta$. We put $k=\overline{r}(\alpha)-\overline{d}(\beta)$. Then the definition of the semigroup $\mathscr{I}^{\mathrm{cf}}_\lambda$ implies that $k\geqslant 0$, $\overline{d}(\alpha\beta)=\overline{d}(\alpha)$, $\overline{r}(\alpha\beta)=\overline{r}(\beta)-k$, and hence in this case we get that
\begin{equation*}
    \overline{d}(\alpha\beta)-\overline{r}(\alpha\beta)=
    \overline{d}(\alpha)-\overline{r}(\alpha)+
    \overline{d}(\beta)-\overline{r}(\beta).
\end{equation*}

$(2)$~Suppose that the case when $\operatorname{dom}\beta\subseteq \operatorname{ran}\alpha$ holds. We put $k=\overline{d}(\beta)-\overline{r}(\alpha)$. Then the definition of the semigroup $\mathscr{I}^{\mathrm{cf}}_\lambda$ implies that $k\geqslant 0$, $\overline{d}(\alpha\beta)=\overline{d}(\alpha)+k$, $\overline{r}(\alpha\beta)=\overline{r}(\beta)$, and hence in this case we have that
\begin{equation*}
    \overline{d}(\alpha\beta)-\overline{r}(\alpha\beta)=
    \overline{d}(\alpha)-\overline{r}(\alpha)+
    \overline{d}(\beta)-\overline{r}(\beta).
\end{equation*}

$(3)$~Now we consider the case $(\lambda\setminus\operatorname{ran}\alpha)\cap (\lambda\setminus\operatorname{dom}\beta)\neq\varnothing$, $\operatorname{ran}\alpha\nsubseteq \operatorname{dom}\beta$ and
$\operatorname{dom}\beta\nsubseteq \operatorname{ran}\alpha$. Then the definition of the semigroup $\mathscr{I}^{\mathrm{cf}}_\lambda$ implies that there exist positive integers $i$, $j$ and $k$ such that $i=\left|(\lambda\setminus\operatorname{ran}\alpha)\setminus (\lambda\setminus\operatorname{dom}\beta)\right|$, $j=\left|(\lambda\setminus\operatorname{ran}\alpha)\cap (\lambda\setminus\operatorname{dom}\beta)\right|$ and $k=\left|(\lambda\setminus\operatorname{dom}\beta)\setminus (\lambda\setminus\operatorname{ran}\alpha)\right|$. Then we have that $\overline{r}(\alpha)=i+j$, $\overline{d}(\beta)=j+k$, $\overline{d}(\alpha\beta)=\overline{d}(\alpha)+k$ and $\overline{r}(\alpha\beta)=\overline{r}(\beta)+i$. Therefore, in this case  we get that
\begin{equation*}
    \overline{d}(\alpha\beta)-\overline{r}(\alpha\beta)=
    \overline{d}(\alpha)-\overline{r}(\alpha)+
    \overline{d}(\beta)-\overline{r}(\beta).
\end{equation*}

$(4)$~In the case when $(\lambda\setminus\operatorname{ran}\alpha)\cap (\lambda\setminus\operatorname{dom}\beta)=\varnothing$ we have that the definition of the semigroup $\mathscr{I}^{\mathrm{cf}}_\lambda$ implies that $\overline{d}(\alpha\beta)=\overline{d}(\alpha)+\overline{d}(\beta)$, $\overline{r}(\alpha\beta)=\overline{r}(\alpha)+\overline{r}(\beta)$, and hence we get that
\begin{equation*}
    \overline{d}(\alpha\beta)-\overline{r}(\alpha\beta)=
    \overline{d}(\alpha)-\overline{r}(\alpha)+
    \overline{d}(\beta)-\overline{r}(\beta).
\end{equation*}
This completes the proof of the proposition.
\end{proof}

On the semigroup $\mathscr{I}^{\mathrm{cf}}_\lambda$ we define a relation $\sim_{\mathfrak{d}}$ in the following way:
\begin{equation*}
    \alpha\sim_{\mathfrak{d}}\beta \qquad \hbox{if and only if} \qquad \overline{d}(\alpha)-\overline{r}(\alpha)=
    \overline{d}(\beta)-\overline{r}(\beta),
\end{equation*}
for $\alpha,\beta\in\mathscr{I}^{\mathrm{cf}}_\lambda$.

\begin{proposition}\label{proposition-3.3}
Let $\lambda$ be an infinite cardinal. Then $\sim_{\mathfrak{d}}$ is a congruence on the semigroup $\mathscr{I}^{\mathrm{cf}}_\lambda$ and moreover the quotient semigroup $\mathscr{I}^{\mathrm{cf}}_\lambda/\sim_{\mathfrak{d}}$ is isomorphic to the additive group of integers $\mathbb{Z}(+)$.
\end{proposition}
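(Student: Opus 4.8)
The plan is to recognize $\sim_{\mathfrak{d}}$ as the kernel congruence of an explicit semigroup homomorphism onto $\mathbb{Z}(+)$. First I would introduce the map $\mathfrak{d}\colon\mathscr{I}^{\mathrm{cf}}_\lambda\to\mathbb{Z}$ defined by $\mathfrak{d}(\alpha)=\overline{d}(\alpha)-\overline{r}(\alpha)$. Proposition~\ref{proposition-3.2} states precisely that $\mathfrak{d}(\alpha\beta)=\mathfrak{d}(\alpha)+\mathfrak{d}(\beta)$ for all $\alpha,\beta\in\mathscr{I}^{\mathrm{cf}}_\lambda$, so $\mathfrak{d}$ is a homomorphism of $\mathscr{I}^{\mathrm{cf}}_\lambda$ into the group $\mathbb{Z}(+)$. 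By the very definition of $\sim_{\mathfrak{d}}$ we have $\alpha\sim_{\mathfrak{d}}\beta$ if and only if $\mathfrak{d}(\alpha)=\mathfrak{d}(\beta)$; hence $\sim_{\mathfrak{d}}$ is the congruence on $\mathscr{I}^{\mathrm{cf}}_\lambda$ induced by the homomorphism $\mathfrak{d}$, and consequently the quotient $\mathscr{I}^{\mathrm{cf}}_\lambda/\sim_{\mathfrak{d}}$ is isomorphic to the image $\mathfrak{d}(\mathscr{I}^{\mathrm{cf}}_\lambda)$, which is a subsemigroup of $\mathbb{Z}(+)$.

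It then remains to show that this image is all of $\mathbb{Z}$. For that I would exhibit a single element $\gamma$ with $\mathfrak{d}(\gamma)=1$: fix a countably infinite subset $\{x_n\}_{n\in\omega}\subseteq\lambda$, let $\gamma$ be the identity on $\lambda\setminus\{x_n\}_{n\in\omega}$ and put $(x_{n+1})\gamma=x_n$ for every $n\in\omega$. Then only the point $x_0$ is omitted from the domain and no point is omitted from the range, so $\gamma\in\mathscr{I}^{\mathrm{cf}}_\lambda$ with $\overline{d}(\gamma)=1$ and $\overline{r}(\gamma)=0$, whence $\mathfrak{d}(\gamma)=1$ and $\mathfrak{d}(\gamma^{-1})=-1$, while $\mathfrak{d}(\mathbb{I})=0$. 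Since $\mathfrak{d}$ is a homomorphism, its image contains $\mathfrak{d}(\gamma^n)=n$ for every $n\geqslant 1$, the negatives $\mathfrak{d}((\gamma^{-1})^n)=-n$, and $0$; therefore $\mathfrak{d}(\mathscr{I}^{\mathrm{cf}}_\lambda)=\mathbb{Z}$, and the first part of the argument gives $\mathscr{I}^{\mathrm{cf}}_\lambda/\sim_{\mathfrak{d}}\,\cong\,\mathbb{Z}(+)$.

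The substance of the proof is carried entirely by Proposition~\ref{proposition-3.2}, so there is no genuine obstacle here. The only points demanding a little care are the (routine) observation that the kernel of a semigroup homomorphism is automatically a congruence, which lets one avoid verifying left and right compatibility of $\sim_{\mathfrak{d}}$ separately from Proposition~\ref{proposition-3.2} (though that verification is equally immediate), and the check that the explicitly constructed $\gamma$ really belongs to $\mathscr{I}^{\mathrm{cf}}_\lambda$, i.e.\ that its domain and range are cofinite in $\lambda$, which is clear from the construction.
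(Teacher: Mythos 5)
Your proposal is correct and follows essentially the same route as the paper: the paper also defines the homomorphism $(\alpha)h=\overline{d}(\alpha)-\overline{r}(\alpha)$ onto $\mathbb{Z}(+)$, identifies $\sim_{\mathfrak{d}}$ as its kernel congruence via Proposition~\ref{proposition-3.2}, and concludes that the quotient is $\mathbb{Z}(+)$. Your explicit shift-type element $\gamma$ witnessing surjectivity is a welcome added detail that the paper leaves implicit.
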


\begin{proof}
Simple verifications and Proposition~\ref{proposition-3.2} imply that $\sim_{\mathfrak{d}}$ is a congruence on the semigroup $\mathscr{I}^{\mathrm{cf}}_\lambda$. We define a homomorphism $h\colon \mathscr{I}^{\mathrm{cf}}_\lambda\rightarrow \mathbb{Z}(+)$ by the formula $(\alpha)h=\overline{d}(\alpha)-\overline{r}(\alpha)$. Then the definitions of the semigroup $\mathscr{I}^{\mathrm{cf}}_\lambda$ and the congruence $\sim_{\mathfrak{d}}$ on $\mathscr{I}^{\mathrm{cf}}_\lambda$, and Proposition~\ref{proposition-3.2} imply that thus defined map $h$ is a surjective homomorphism and moreover $(\alpha)h=(\beta)h$ if and only if $\alpha\sim_{\mathfrak{d}}\beta$ in $\mathscr{I}^{\mathrm{cf}}_\lambda$. This completes the proof of the proposition.
\end{proof}

\begin{proposition}\label{proposition-3.4}
Let $\lambda$ be an infinite cardinal. Then for every element $\beta$ of the semigroup $\mathscr{I}^{\mathrm{cf}}_\lambda$ such that $\overline{d}(\beta)= \overline{r}(\beta)$ there exists an element $\alpha$ of the group of units of $\mathscr{I}^{\mathrm{cf}}_\lambda$ such that $\alpha\sigma\beta$.
\end{proposition}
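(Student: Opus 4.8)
The plan is to show that the given $\beta$, which satisfies $\overline{d}(\beta)=\overline{r}(\beta)$, can be extended to a bijection $\alpha\colon\lambda\to\lambda$, and then to verify that $\alpha\sigma\beta$ by exhibiting a common idempotent multiplier. First I would set $F=\lambda\setminus\operatorname{dom}\beta$ and $G=\lambda\setminus\operatorname{ran}\beta$. By hypothesis $|F|=|G|=n$ for some $n\in\omega$, so there exists a bijection $\varphi\colon F\to G$. Define $\alpha\colon\lambda\to\lambda$ by $(x)\alpha=(x)\beta$ for $x\in\operatorname{dom}\beta$ and $(x)\alpha=(x)\varphi$ for $x\in F$. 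Since $\beta$ is injective with $\operatorname{ran}\beta=\lambda\setminus G$, and $\varphi$ is a bijection onto $G$, the map $\alpha$ is a well-defined bijection of $\lambda$ onto itself, hence $\alpha\in H(\mathbb{I})$ by Corollary~\ref{corollary-2.1a}.

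Next I would check that $\alpha\sigma\beta$. Let $\varepsilon$ be the identity selfmap of the cofinite set $\operatorname{dom}\beta=\lambda\setminus F$; then $\varepsilon\in E(\mathscr{I}^{\mathrm{cf}}_\lambda)$. For every $x\in\operatorname{dom}(\alpha\varepsilon)$ we have $x\in\operatorname{dom}\alpha=\lambda$ and $(x)\alpha\in\operatorname{dom}\varepsilon=\operatorname{dom}\beta$; in particular $x\notin F$ would fail only if $(x)\alpha\in G$, but $(x)\alpha\in G$ forces $x\in F$ and then $(x)\alpha=(x)\varphi\in G$, so such $x$ are excluded from $\operatorname{dom}(\alpha\varepsilon)$. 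Hence $\operatorname{dom}(\alpha\varepsilon)=\{x\in\lambda\mid (x)\alpha\notin G\}=\operatorname{dom}\beta$, and on this set $(x)(\alpha\varepsilon)=(x)\alpha=(x)\beta$. Therefore $\alpha\varepsilon=\beta=\beta\varepsilon$, which by the definition of $\sigma$ gives $\alpha\sigma\beta$.

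The main point requiring care is the bookkeeping in the second paragraph: one must be sure that restricting $\alpha$ by $\varepsilon$ really deletes exactly the part of $\alpha$ that lies outside $\beta$, i.e.\ that $\operatorname{dom}(\alpha\varepsilon)$ equals $\operatorname{dom}\beta$ and not some smaller cofinite set; this is immediate once one observes that $\alpha$ maps $F$ bijectively onto $G=\lambda\setminus\operatorname{ran}\beta$ and maps $\operatorname{dom}\beta$ onto $\operatorname{ran}\beta$. No deeper obstacle arises, and the finiteness of $F$ and $G$ guarantees $\alpha\in\mathscr{I}^{\mathrm{cf}}_\lambda$ and $\varepsilon\in E(\mathscr{I}^{\mathrm{cf}}_\lambda)$ throughout.
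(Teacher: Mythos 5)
Your construction of $\alpha$ is exactly the paper's: extend $\beta$ to a bijection of $\lambda$ by sending the finite set $F=\lambda\setminus\operatorname{dom}\beta$ bijectively onto $G=\lambda\setminus\operatorname{ran}\beta$. The verification, however, contains a slip: you declare $\varepsilon$ to be the identity selfmap of $\operatorname{dom}\beta=\lambda\setminus F$, but with that choice the condition for $x\in\operatorname{dom}(\alpha\varepsilon)$ is $(x)\alpha\notin F$, not $(x)\alpha\notin G$ as you write. These are different sets in general, and the claimed equality $\alpha\varepsilon=\beta\varepsilon$ then fails: if $G\not\subseteq F$, pick $x\in F$ with $(x)\varphi\in G\setminus F$; then $x\in\operatorname{dom}(\alpha\varepsilon)$ (since $(x)\alpha=(x)\varphi\in\operatorname{dom}\beta$) while $x\notin\operatorname{dom}(\beta\varepsilon)$ (since $x\notin\operatorname{dom}\beta$). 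Likewise $\beta\varepsilon$ need not equal $\beta$ for this $\varepsilon$, because $\beta$ may send points of $\operatorname{dom}\beta$ into $F$.

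The fix is a one-word change, and it is what the paper does: take $\varepsilon$ to be the identity selfmap of $\operatorname{ran}\beta=\lambda\setminus G$ (equivalently $\varepsilon=\beta^{-1}\beta$). Your subsequent computation is in fact carried out as if this were the definition --- the identity $\operatorname{dom}(\alpha\varepsilon)=\{x\in\lambda\mid(x)\alpha\notin G\}=\operatorname{dom}\beta$ is correct precisely when $\operatorname{dom}\varepsilon=\lambda\setminus G$ --- and with that correction the argument closes: $\alpha\varepsilon=\beta=\beta\varepsilon$, hence $\alpha\sigma\beta$. So the idea and the construction match the paper; only the stated witness idempotent needs to be replaced by the one your own calculation presupposes.
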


\begin{proof}
Fix an arbitrary element $\beta$ of the semigroup $\mathscr{I}^{\mathrm{cf}}_\lambda$. Without loss of generality we can assume that $\overline{d}(\beta)= \overline{r}(\beta)=k>0$. Let $\{x_1,\ldots,x_k\}= \lambda \setminus\operatorname{dom}\beta$ and $\{y_1,\ldots,y_k\}= \lambda \setminus\operatorname{ran}\beta$. We define a map $\alpha\colon\lambda\rightarrow \lambda$ in the following way:
\begin{equation*}
    (x)\alpha=
\left\{
  \begin{array}{ll}
    (x)\beta, & \hbox{if~~} ~x\in\operatorname{dom}\beta; \\
    y_i, & \hbox{if~~}      ~x=x_i, \; i=1,\ldots,k.
  \end{array}
\right.
\end{equation*}
Then $\alpha$ is an element of the group of units of the semigroup $\mathscr{I}^{\mathrm{cf}}_\lambda$ and it is obviously that $\alpha\varepsilon=\beta\varepsilon$, where $\varepsilon$ is the identity map of the set $\operatorname{ran}\beta$.
\end{proof}

For every $\alpha\in\mathscr{S}_\lambda$ we denote $\operatorname{supp}(\alpha)=\{ x\in\lambda\mid (x)\alpha\neq x\}$. We define
\begin{equation*}
    \mathscr{S}^\infty_\lambda=\left\{\alpha\in\mathscr{S}_\lambda\mid \operatorname{supp}(\alpha) \hbox{ is finite}\right\}.
\end{equation*}

We observe that the Schreier--Ulam theorem (see \cite[Theorem~11.3.4]{Scott1987}) implies that $\mathscr{S}^\infty_\lambda$ is a normal subgroup of $\mathscr{S}_\lambda$ and hence $\mathscr{S}_\lambda/\mathscr{S}^\infty_\lambda$ is a group.

Later on, when $\mathfrak{C}$ is a congruence on a semigroup $S$ we shall denote the \emph{natural homomorphism} generated by the congruence $\mathfrak{C}$ on $S$ by $\pi_{\mathfrak{C}}\colon S\rightarrow S/\mathfrak{C}$.

The definition of the least group congruence $\sigma$ on the semigroup $\mathscr{I}^{\mathrm{cf}}_\lambda$ implies the following proposition.

\begin{proposition}\label{proposition-3.5}
Let $\lambda$ be an infinite cardinal. Then the homomorphic image $(H(\mathbb{I}))\pi_{\sigma}$ of the group of units $H(\mathbb{I})$ of $\mathscr{I}^{\mathrm{cf}}_\lambda$ under the natural homomorphism $\pi_\sigma\colon \mathscr{I}^{\mathrm{cf}}_\lambda\rightarrow\mathscr{I}^{\mathrm{cf}}_\lambda/\sigma$ is isomorphic to the quotient group $\mathscr{S}_\lambda/\mathscr{S}^\infty_\lambda$.
\end{proposition}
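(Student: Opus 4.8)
The plan is to show that the restriction of the natural homomorphism $\pi_\sigma$ to $H(\mathbb{I})$ has kernel exactly $\mathscr{S}^\infty_\lambda$; the statement then follows from the first isomorphism theorem together with Corollary~\ref{corollary-2.1a}, which identifies $H(\mathbb{I})$ with $\mathscr{S}_\lambda$. So I would work throughout with a permutation $\alpha\in H(\mathbb{I})$ and analyse when $\alpha\,\sigma\,\mathbb{I}$.

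First I would prove the inclusion $\mathscr{S}^\infty_\lambda\subseteq\ker(\pi_\sigma|_{H(\mathbb{I})})$, i.e. that $\operatorname{supp}(\alpha)$ finite implies $\alpha\,\sigma\,\mathbb{I}$. This is the easy direction: if $\operatorname{supp}(\alpha)=F$ is finite, let $\varepsilon$ be the identity selfmap of $\lambda\setminus F$; then $\varepsilon\in E(\mathscr{I}^{\mathrm{cf}}_\lambda)$ because $F$ is finite, and $\alpha\varepsilon=\varepsilon=\mathbb{I}\varepsilon$ since $\alpha$ fixes every point of $\lambda\setminus F$. Hence $\alpha\,\sigma\,\mathbb{I}$ by the definition of $\sigma$.

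For the reverse inclusion I would show that if $\alpha\in H(\mathbb{I})$ and $\alpha\,\sigma\,\mathbb{I}$, then $\operatorname{supp}(\alpha)$ is finite. Suppose $\alpha\varepsilon=\mathbb{I}\varepsilon=\varepsilon$ for some idempotent $\varepsilon$; without loss of generality replace $\varepsilon$ by the identity selfmap of $\operatorname{dom}\varepsilon$, a cofinite subset $A=\operatorname{dom}\varepsilon$ of $\lambda$ with $\lambda\setminus A$ finite. The equality $\alpha\varepsilon=\varepsilon$ forces $\operatorname{dom}(\alpha\varepsilon)=A$, and for every $x\in A$ we must have $(x)\alpha\in A$ with $(x)(\alpha\varepsilon)=(x)\varepsilon=x$, whence $(x)\alpha=x$. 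Thus $\alpha$ fixes every point of the cofinite set $A$, so $\operatorname{supp}(\alpha)\subseteq\lambda\setminus A$ is finite, i.e. $\alpha\in\mathscr{S}^\infty_\lambda$. Combining the two inclusions gives $\ker(\pi_\sigma|_{H(\mathbb{I})})=\mathscr{S}^\infty_\lambda$ (under the identification of Corollary~\ref{corollary-2.1a}), and therefore $(H(\mathbb{I}))\pi_\sigma\cong H(\mathbb{I})/\mathscr{S}^\infty_\lambda\cong\mathscr{S}_\lambda/\mathscr{S}^\infty_\lambda$.

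The computations here are all routine; the only point requiring a little care is to make sure that ``without loss of generality'' in choosing $\varepsilon$ to be an identity selfmap of its domain is justified. This is harmless: for any idempotent $\varepsilon$, Proposition~\ref{proposition-2.1}$(ii)$ says $\varepsilon$ is precisely the identity map on $\operatorname{dom}\varepsilon$, so no reduction is actually needed. The one genuinely external ingredient is the Schreier--Ulam theorem, already quoted in the excerpt, which guarantees that $\mathscr{S}^\infty_\lambda\trianglelefteq\mathscr{S}_\lambda$ so that the quotient group in the statement makes sense; I would simply cite it as done above. There is no real obstacle: the proposition is essentially an unwinding of the definition of $\sigma$ restricted to the group of units.
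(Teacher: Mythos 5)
Your proof is correct, and it is exactly the argument the paper leaves implicit: the authors state that Proposition~\ref{proposition-3.5} ``follows from the definition of the least group congruence $\sigma$'' without writing out details, and your unwinding (kernel of $\pi_\sigma|_{H(\mathbb{I})}$ equals $\{\alpha\in H(\mathbb{I})\mid \alpha\sigma\mathbb{I}\}=\mathscr{S}^\infty_\lambda$, then the first isomorphism theorem via Corollary~\ref{corollary-2.1a}) is the intended one. Both directions of your kernel computation are sound, including the point that $\alpha$ permutes its (finite) support so that $\operatorname{dom}(\alpha\varepsilon)=\operatorname{dom}\varepsilon$ in the easy inclusion.
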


\begin{theorem}\label{theorem-3.6}
Let $\lambda$ be an infinite cardinal. Then the following conditions hold:
\begin{itemize}
  \item[$(i)$] $(H(\mathbb{I}))\pi_{\sigma}= \mathscr{S}_\lambda/\mathscr{S}^\infty_\lambda$ is a normal subgroup of the group $\mathscr{I}^{\mathrm{cf}}_\lambda/\sigma$;
  \item[$(ii)$] The group $\mathscr{I}^{\mathrm{cf}}_\lambda/\sigma$ contains the infinite cyclic subgroup $G$ (i.e., the additive group of integers $\mathbb{Z}(+)$) such that $G\cap \mathscr{S}_\lambda/\mathscr{S}^\infty_\lambda=\{e\}$, where $e$ is the unit of the group $\mathscr{I}^{\mathrm{cf}}_\lambda/\sigma$;
  \item[$(iii)$] $\mathscr{S}_\lambda/\mathscr{S}^\infty_\lambda\cdot G= \mathscr{I}^{\mathrm{cf}}_\lambda/\sigma$.
\end{itemize}
and hence the group $\mathscr{I}^{\mathrm{cf}}_\lambda/\sigma$ is isomorphic to the semidirect product $\mathscr{S}_\lambda/\mathscr{S}^\infty_\lambda\ltimes \mathbb{Z}(+)$.
\end{theorem}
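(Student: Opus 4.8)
The plan is to assemble the semidirect product decomposition from the three pieces already available: the normal subgroup $N := (H(\mathbb{I}))\pi_\sigma \cong \mathscr{S}_\lambda/\mathscr{S}^\infty_\lambda$ coming from Proposition~\ref{proposition-3.5}, a complementary infinite cyclic subgroup $G$, and the fact that $N\cdot G$ exhausts the whole group. For $(i)$, normality of $N$ will be derived by combining Proposition~\ref{proposition-3.4} with a counting argument: an element $\gamma\in\mathscr{I}^{\mathrm{cf}}_\lambda$ satisfies $\overline{d}(\gamma)=\overline{r}(\gamma)$ precisely when $(\gamma)h=0$, where $h$ is the homomorphism onto $\mathbb{Z}(+)$ of Proposition~\ref{proposition-3.3}; since $\sim_{\mathfrak d}$ contains $\sigma$ (indeed by Proposition~\ref{proposition-3.1} the congruence $\sigma$ is contained in $\sim_{\mathfrak d}$), the map $h$ factors through $\pi_\sigma$ to give a well-defined homomorphism $\overline{h}\colon \mathscr{I}^{\mathrm{cf}}_\lambda/\sigma\to\mathbb{Z}(+)$, and Proposition~\ref{proposition-3.4} shows that $\ker\overline{h}=N$. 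A kernel of a homomorphism is normal, so $(i)$ follows.

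For $(ii)$ I would exhibit an explicit integer "shift" element. Using Remark~\ref{remark-1.1} (or Proposition~\ref{proposition-2.5}), choose $\alpha\in\mathscr{I}^{\mathrm{cf}}_\lambda$ with $\overline{d}(\alpha)=1$ and $\overline{r}(\alpha)=0$ — for instance, fix a copy of $\mathbb{N}$ inside $\lambda$ and let $\alpha$ act as $n\mapsto n+1$ there and as the identity off it. Then $(\alpha)h=1$, so $\overline{h}((\alpha)\pi_\sigma)=1$, which forces $(\alpha)\pi_\sigma$ to have infinite order in $\mathscr{I}^{\mathrm{cf}}_\lambda/\sigma$; set $G=\langle (\alpha)\pi_\sigma\rangle$, an infinite cyclic group isomorphic to $\mathbb{Z}(+)$. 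Since $\overline{h}$ is injective on $G$ and kills $N$, we get $G\cap N=\{e\}$. For $(iii)$, given any $g\in\mathscr{I}^{\mathrm{cf}}_\lambda/\sigma$, write $n=\overline{h}(g)\in\mathbb{Z}$; then $\overline{h}(g\cdot(\alpha)^{-n}\pi_\sigma)=0$, so $g\cdot(\alpha)^{-n}\pi_\sigma\in N$ by the description of $\ker\overline{h}$, whence $g\in N\cdot G$. Thus $N\cdot G=\mathscr{I}^{\mathrm{cf}}_\lambda/\sigma$, $N$ is normal, and $N\cap G$ is trivial, so the group is the (internal, hence external) semidirect product $N\ltimes G\cong \mathscr{S}_\lambda/\mathscr{S}^\infty_\lambda\ltimes\mathbb{Z}(+)$.

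The main obstacle I anticipate is establishing cleanly that $\sigma\subseteq\sim_{\mathfrak d}$ (equivalently that $h$ descends through $\pi_\sigma$) and that $\ker\overline{h}$ is \emph{exactly} $N$ rather than merely containing it. The inclusion $\sigma\subseteq\sim_{\mathfrak d}$ is essentially the content of Proposition~\ref{proposition-3.1}, so that direction is in hand. The identification $\ker\overline h = N$ requires both inclusions: $N\subseteq\ker\overline h$ is immediate because units have $\overline d=\overline r=0$; the reverse inclusion is where Proposition~\ref{proposition-3.4} does the real work, since it produces, for every $\beta$ with $\overline d(\beta)=\overline r(\beta)$, a unit $\alpha$ with $\alpha\,\sigma\,\beta$, i.e. $(\beta)\pi_\sigma=(\alpha)\pi_\sigma\in N$. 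Once these two facts are nailed down, everything else is the standard internal-semidirect-product recognition criterion, and the verification that the abstract semidirect product does not depend on the chosen splitting $G$ is routine.
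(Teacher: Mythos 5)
Your proposal is correct and follows essentially the same route as the paper: both arguments use $\sigma\subseteq\sim_{\mathfrak d}$ to descend the homomorphism onto $\mathbb{Z}(+)$ to a map $\overline h$ on $\mathscr{I}^{\mathrm{cf}}_\lambda/\sigma$, identify $(H(\mathbb{I}))\pi_\sigma$ as its kernel via Proposition~\ref{proposition-3.4}, take $G$ generated by the image of a shift with $\overline d(\alpha)=1$, $\overline r(\alpha)=0$, and prove $(iii)$ by multiplying a representative by $(\alpha^{-1})^n$ to land in the kernel. The two points you flag as potential obstacles (that $\sigma\subseteq\sim_{\mathfrak d}$ and that $\ker\overline h$ equals, not merely contains, $(H(\mathbb{I}))\pi_\sigma$) are exactly the ones the paper settles with Propositions~\ref{proposition-3.1} and~\ref{proposition-3.4}, so your argument is complete as stated.
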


\begin{proof}
$(i)$ Since $\sigma\subseteq\sim_{\mathfrak{d}}$ we conclude that Theorem~1.6 of \cite{CP} implies that there exists a unique homomorphism $g\colon \mathscr{I}^{\mathrm{cf}}_\lambda/\sigma\rightarrow G$ such that the following diagram
\begin{equation*}
    \xymatrix{ \mathscr{I}^{\mathrm{cf}}_\lambda \ar[r]^{\pi_{\sigma}}\ar[dr]_{\pi_{\sim_{\mathfrak{d}}}} & \mathscr{I}^{\mathrm{cf}}_\lambda/\sigma\ar[d]^g\\ & G }
\end{equation*}
commutes. Then by Proposition~\ref{proposition-3.5} we have that the homomorphic image $(H(\mathbb{I}))\pi_{\sigma}$ of the group of units $H(\mathbb{I})$ of $\mathscr{I}^{\mathrm{cf}}_\lambda$ under the natural homomorphism $\pi_\sigma\colon \mathscr{I}^{\mathrm{cf}}_\lambda\rightarrow\mathscr{I}^{\mathrm{cf}}_\lambda/\sigma$ is isomorphic the the quotient group $\mathscr{S}_\lambda/\mathscr{S}^\infty_\lambda$. Now Propositions~\ref{proposition-3.4} and ~\ref{proposition-3.5} imply that the subgroup $(H(\mathbb{I}))\pi_{\sigma}= \mathscr{S}_\lambda/\mathscr{S}^\infty_\lambda$ of the group $\mathscr{I}^{\mathrm{cf}}_\lambda/\sigma$ is the kernel of the homomorphism $g\colon\mathscr{I}^{\mathrm{cf}}_\lambda/\sigma\rightarrow G$, and hence $(H(\mathbb{I}))\pi_{\sigma}= \mathscr{S}_\lambda/\mathscr{S}^\infty_\lambda$ is a normal subgroup of $\mathscr{I}^{\mathrm{cf}}_\lambda/\sigma$.

$(ii)$ Fix an arbitrary $\alpha\in\mathscr{I}^{\mathrm{cf}}_\lambda$ such that $|\lambda\setminus\operatorname{dom}\alpha|=1$ and $\operatorname{ran}\alpha=\lambda$. Then the definition of the congruence $\sim_{\mathfrak{d}}$ on $\mathscr{I}^{\mathrm{cf}}_\lambda$ implies that the element $\alpha^n$ is not $\sim_{\mathfrak{d}}$-equivalent to any element of the group of units $H(\mathbb{I})$ for every non-zero integer $n$, and hence by Proposition~\ref{proposition-3.5} we get that $((\alpha)\pi_{\sigma})^n\notin \mathscr{S}_\lambda/\mathscr{S}^\infty_\lambda$. This implies that $\{((\alpha)\pi_{\sim_{\mathfrak{d}}})^n\mid n\in\mathbb{Z}\}\cap\mathscr{S}_\lambda/\mathscr{S}^\infty_\lambda=\{e\}$, where $e$ is the unit of the group $\mathscr{I}^{\mathrm{cf}}_\lambda/\sigma$.
Also, it is obvious that $(\alpha^n)\pi_{\sim_{\mathfrak{d}}}=n\in G$ and $\{(\alpha^n)\pi_{\sim_{\mathfrak{d}}}\mid n\in\mathbb{Z}\}$ is a cyclic subgroup of $\mathscr{S}_\lambda/\mathscr{S}^\infty_\lambda$.

$(iii)$ Fix an arbitrary element $x$ in $\mathscr{I}^{\mathrm{cf}}_\lambda/\sigma$. Let $\xi$ be an arbitrary element of the semigroup $\mathscr{I}^{\mathrm{cf}}_\lambda$ be such that $(\xi)\pi_{\sigma}=x$. If $\overline{d}(\xi)=\overline{r}(\xi)$ then by Proposition~\ref{proposition-3.4} we have that $\xi\sigma\beta$ for some element $\beta$ from the group of units of $\mathscr{I}^{\mathrm{cf}}_\lambda$, and hence we get that $x=(\beta)\pi_\sigma\cdot e\in \mathscr{S}_\lambda/\mathscr{S}^\infty_\lambda\cdot G$, where $e$ is the unit of the group $\mathscr{I}^{\mathrm{cf}}_\lambda/\sigma$. Suppose that $\overline{d}(\xi)-\overline{r}(\xi)=n\neq 0$. Then by Proposition~\ref{proposition-3.2} we have that $\overline{d}\left(\xi\cdot(\alpha^{-1})^n\right)- \overline{r}\left(\xi\cdot(\alpha^{-1})^n\right)=0$. Now, Proposition~\ref{proposition-3.4} implies that the element  $\xi\cdot(\alpha^{-1})^n$ is $\sigma$-equivalent to some element $\beta$ of the group of units $H(\mathbb{I})$ of $\mathscr{I}^{\mathrm{cf}}_\lambda$. Then we have that $(\xi\cdot(\alpha^{-1})^n)\pi_\sigma=(\beta)\pi_\sigma$ and since $\mathscr{I}^{\mathrm{cf}}_\lambda/\sigma$ is a group we get that $x=(\xi)\pi_\sigma=(\beta)\pi_\sigma\cdot (\alpha^n)\pi_\sigma\in \mathscr{S}_\lambda/\mathscr{S}^\infty_\lambda\cdot G$. This implies that $\mathscr{S}_\lambda/\mathscr{S}^\infty_\lambda\cdot G= \mathscr{I}^{\mathrm{cf}}_\lambda/\sigma$.

The last statement of the theorem follows from statements $(i)$--$(iii)$ and Exercise~2.5.3 from~\cite{DixonMortimer}.
\end{proof}

\begin{remark}\label{remark-3.7}
Proposition~\ref{proposition-3.3} implies that for every infinite cardinal $\lambda$ the group $\mathscr{I}^{\mathrm{cf}}_\lambda/\sigma$ has infinitely many normal subgroups and hence the semigroup $\mathscr{I}^{\mathrm{cf}}_\lambda$ has infinitely many group congruences.
\end{remark}

\begin{acknowledgement}
The authors are grateful to the referee for several useful comments
and suggestions.
\end{acknowledgement}



\begin{thebibliography}{xx}

\bibitem{Andersen}
ANDERSEN,~O.:  \textit{Ein Bericht \"{u}ber die
Struktur abstrakter Halbgruppen}, PhD Thesis, Hamburg, 1952.

\bibitem{AHK}
ANDERSON,~L.~W.---HUNTER, R.~P.---KOCH,~R.~J.:
\textit{Some results on stability in semigroups}, Trans. Amer. Math.
Soc. \textbf{117} (1965), 521--529.



\bibitem{BanakhDimitrovaGutik2009}
BANAKH,~T.---DIMITROVA,~S.---GUTIK,~O.:
\textit{The Rees-Suschkiewitsch theorem for simple topological semigroups}, Mat. Stud. \textbf{31} (2009), 211--218.

\bibitem{BanakhDimitrovaGutik2010}
BANAKH,~T.---DIMITROVA,~S.---GUTIK,~O.:
\textit{Embedding the bicyclic semigroup into countably compact topological semigroups}, Topology Appl. \textbf{157} (2010), 2803--2814.


\bibitem{CP}
CLIFFORD,~A.~H.---PRESTON,~G.~B.: \textit{The
Algebraic Theory of Semigroups}, Vol. I/II, American Mathematical Society, Providence, Rhode Island, 1961/1967.

\bibitem{DixonMortimer}
DIXON~J.~D.---MORTIMER~B.: \textit{Permutation Groups}, Springer, Berlin, 1996.

\bibitem{EberhartSelden1969}
EBERHART,~C.---SELDEN,~J.: \textit{On
the closure of the bicyclic semigroup}, Trans. Amer. Math. Soc. \textbf{144} (1969), 115--126.






\bibitem{GutikRepovs2007}
GUTIK,~O.---REPOV\v{S},~D.:
\textit{On countably compact $0$-simple topological inverse semigroups},
Semigroup Forum \textbf{75} (2007), 464--469.

\bibitem{GutikRepovs2011}
GUTIK,~O.---REPOV\v{S},~D.: \textit{Topological monoids of monotone injective partial selfmaps of N with cofinite domain and image}, Stud. Sci. Math. Hung. \textbf{48} (2011), 342--353.

\bibitem{GutikRepovs2012x}
GUTIK,~O.---REPOV\v{S},~D.: \textit{On monoids of injective partial selfmaps of integers with cofinite domains and images}, Georgian Math. J. \textbf{19} (2012), 511--532.

\bibitem{HildebrantKoch1988}
HILDEBRANT,~J.~A.---KOCH,~R.~J.:
\textit{Swelling actions of $\Gamma$-compact semigroups}, Semigroup
Forum \textbf{33}  (1986), 65--85.


\bibitem{Howie1995}
HOWIE,~J. M.:
\textit{Fundamentals of Semigroup Theory}, London Math. Soc. Monogr.
(N.S.) No. 12, Clarendon Press, Oxford, 1995.

\bibitem{Petrich1984}
PETRICH,~M.: \textit{Inverse Semigroups}, Pure Appl. Math., John Wiley $\&$ Sons, New York, 1984.

\bibitem{Scott1987}
SCOTT,~W.~R.: \emph{Group Theory}, Dover Publications, Inc., New York, 1987.

\bibitem{ShelahSteprans1989}
SHELAH,~S.---STEPR\={A}NS,~J.: \textit{Non-trivial homeomorphisms of $\beta  N\setminus N$ without the Continuum Hypothesis}, Fund. Math. \textbf{132} (1989), 135--141.

\bibitem{ShelahSteprans1994}
SHELAH,~S.---STEPR\={A}NS,~J.: \textit{Somewhere trivial autohomeomorphisms}, J. London Math. Soc. (2), \textbf{49} (1994), 569--580.

\bibitem{ShelahSteprans2002}
SHELAH,~S.---STEPR\={A}NS,~J.: \textit{Martin's axiom is consistent with the existence of nowhere trivial automorphisms}, Proc. Amer. Math. Soc. \textbf{130} (2002), 2097--2106.


\bibitem{Velickovic1986}
VELI\v{C}KOVI\'{C},~B.: \textit{Definable automorphisms of $\mathscr{P}(\omega)/\operatorname{fin}$}, Proc. Amer. Math. Soc. \textbf{96}  (1986), 130--135.

\bibitem{Velickovic1992}
VELI\v{C}KOVI\'{C},~B.: \textit{Applications of the Open Coloring Axiom}, In Set Theory of the Continuum, H. Judah, W. Just et H. Woodin, eds., Pap. Math. Sci. Res. Inst. Workshop, Berkeley, 1989, MSRI Publications. Springer-Verlag. Vol. \textbf{26}, Berlin, (1992), pp. 137--154.

\bibitem{Velickovic1993}
VELI\v{C}KOVI\'{C},~B.: \textit{OCA and automorphisms of $\mathscr{P}(\omega)/\operatorname{fin}$}, Topology Appl. \textbf{49} (1993), 1--13.

\bibitem{Wagner1952}
VAGNER,~V.~V.: \textit{Generalized groups}, Dokl. Akad. Nauk SSSR \textbf{84} (1952), 1119--1122 (in Russian).
\end{thebibliography}
\end{document}